\documentclass[reqno]{amsart}
\usepackage{amssymb}
\usepackage{amsfonts}
\usepackage{graphicx}

\newtheorem{theorem}{Theorem}
\theoremstyle{plain}

\newtheorem{corollary}{Corollary}

\newtheorem{lemma}{Lemma}

\newtheorem{proposition}{Proposition}
\newtheorem{remark}{Remark}

\newcommand{\func}[1]{\operatorname{#1}}

\numberwithin{equation}{section}

\begin{document}
\title[Secure domination in odd-order Sierpi\'{n}ski graphs]{Secure
domination in odd-order Sierpi\'{n}ski graphs}
\author{Qingcheng Zeng}
\address{School of Mathematical Sciences, Beihang University, Beijing
100083, P. R. China}
\email{qczeng@buaa.edu.cn}
\author{Yumei Xue}
\address{School of Mathematical Sciences, Beihang University, Beijing
100083, P. R. China}
\email{yxue@buaa.edu.cn}
\author{Cheng Zeng}
\address{School of Mathematics and Information Science, Shandong Technology
and Business University, Yantai, Shandong Province 264003, P. R. China}
\email{czeng@sdtbu.edu.cn}
\author{Dehui Wang}
\address{School of Mathematical Sciences, Beihang University, Beijing
100083, P. R. China}
\email{dehuiwang@buaa.edu.cn}

\begin{abstract}
A nonempty vertex set $D$ of a graph is secure if and only if, for every
nonempty subset $X$ of $D$, the closed neighborhood of $X$ contains at least
as many vertices of $D$ as vertices outside $D$. A secure dominating set is
a set that is both secure and dominating, and the secure domination number
is the minimum cardinality of such a set. Previous work determined the
secure domination number of Sierpi\'{n}ski graphs $S_{k}^{n}$ for even $k$
and for odd $k$ when $n=2$. The exact values for odd $k$ and $n\geq 3$
remained open. In this paper, we determine these remaining values.
\end{abstract}

\subjclass[2020]{05C69}
\thanks{Cheng Zeng is the corresponding author. }
\keywords{Sierpi\'{n}ski graph, secure sets, secure dominating number}
\maketitle

\section{Introduction}

All graphs in this paper are finite, undirected, and simple unless
explicitly stated otherwise. We use the standard graph-theoretic terminology
and notation of \cite{Diestel2025}.

For a vertex $v$ of a graph $G=(V,E)$, the \emph{open neighborhood} and 
\emph{closed neighborhood} of $v$ are defined by 
\begin{equation*}
N_{G}(v):=\{u\in V:uv\in E\},\text{\ }N_{G}[v]:=N_{G}(v)\cup \{v\},
\end{equation*}%
respectively. For a vertex subset $X\subset V$, its closed neighborhood is
defined by $N_{G}[X]:=\bigcup_{x\in X}N_{G}[x]$. We omit the graph subscript
when the underlying graph is clear. A set $D\subset V$ is called \emph{%
dominating} if $N[D]=V$.

The notion of security in graphs was introduced by Brigham, Dutton and
Hedetniemi \cite{BDH2007}. Let $S=\{s_{1},\ldots ,s_{t}\}\subset V$ be
nonempty. An \emph{attack} on $S$ is a family of pairwise disjoint vertex
sets $A_{1},\ldots ,A_{t}$ such that $A_{i}\subset N[s_{i}]\setminus S$ for
every $1\leq i\leq t.$ A \emph{defense} of $S$ is a family of pairwise
disjoint vertex sets $B_{1},\ldots ,B_{t}$ such that $B_{i}\subset
N[s_{i}]\cap S$\ for every $1\leq i\leq t.$ An attack is \emph{defendable}
if there exists a defense satisfying $|B_{i}|\geq |A_{i}|$ for $1\leq i\leq
t.$ The set $S$ is called \emph{secure} if every attack on $S$ is
defendable. They further established the following equivalent
characterization \cite{BDH2007}: a nonempty set $S\subset V$ is secure if
and only if 
\begin{equation}
|N[X]\cap S|\geq |N[X]\setminus S|\text{\ for every nonempty }X\subset S.
\label{eq:security}
\end{equation}%
The minimum cardinality of a secure set in $G$ is the \emph{security number} 
$s(G)$. The \emph{secure domination number} $\gamma _{s}(G)$ is the minimum
cardinality of a set that is both secure and dominating, called a \emph{%
secure dominating set}.

The notion of secure domination used here differs from that studied by
Cockayne et al. \cite{CGGMV2005}.

For integers $k,n\geq 1$, write $[k]=\{1,\ldots ,k\}$. The Sierpi\'{n}ski
graph $S_{k}^{n}$ has vertex set $[k]^{n}$. Distinct vertices $u=u_{1}\cdots
u_{n}$ and $v=v_{1}\cdots v_{n}$ are adjacent if and only if there is an
index $h\in \lbrack n]$ such that 
\begin{equation}
\begin{array}{l}
u_{t}=v_{t}\text{ for }t<h, \\ 
u_{h}\neq v_{h}, \\ 
u_{t}=v_{h},\ v_{t}=u_{h}\ \text{for }t>h.%
\end{array}
\label{eq:sier}
\end{equation}%
Sierpi\'{n}ski graphs were introduced by Klav\v{z}ar and Milutinovi\'{c} 
\cite{KM1997}. Their properties have since been studied extensively,
including perfect codes \cite{KMP2002, GKM2005}, colorings and Hamiltonian
cycles \cite{Klavzar2008}, and metric properties \cite{Parisse2009}.
Connectivity has also been investigated for generalized Sierpi\'{n}ski
graphs \cite{KZ2018}. For a comprehensive survey of Sierpi\'{n}ski-type
graphs, see \cite{HKZ2017}. See Fig. \ref{fig1} for an illustration of the
Sierpi\'{n}ski graph $S_{5}^{2}.$ 
\begin{figure}[tbph]
\includegraphics[width=0.6\textwidth]{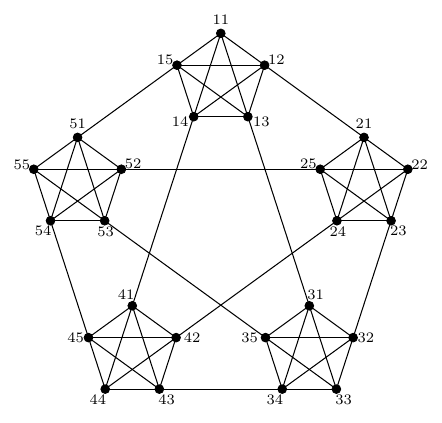}
\caption{Sierpi\'{n}ski graph $S_{5}^{2}$}
\label{fig1}
\end{figure}

Recently, Menon, Chithra and Savitha \cite{MCS2023} determined the security
number of Sierpi\'{n}ski graphs and the secure domination number for even $k$%
, as well as for odd $k$ when $n=2$. Their results are summarized below.

\begin{theorem}[\protect\cite{MCS2023}]
\label{thm:known-menon}For $n,k>1$, $s(S_{k}^{n})=k-1.$ Moreover, $\gamma
_{s}(S_{k}^{n})=\frac{k^{n}}{2}$ if $k$ is even. For odd $k>1$, $\gamma
_{s}(S_{k}^{2})=k\lceil \frac{k}{2}\rceil -1.$ For $k\geq 1,$ we also have $%
\gamma _{s}(S_{k}^{1})=\gamma _{s}(K_{k})=\lceil \frac{k}{2}\rceil ,$ where $%
K_{k}$ denotes the complete graph on $k$ vertices.
\end{theorem}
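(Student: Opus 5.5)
We plan to prove the four assertions of Theorem~\ref{thm:known-menon} separately, each time with the characterization \eqref{eq:security}. Throughout we use the standard structure of $S_k^n$. For $i\in[k]$, the vertices with first letter $i$ induce a copy $iS_k^{n-1}$ of $S_k^{n-1}$. Two such copies $iS_k^{n-1}$ and $jS_k^{n-1}$ are joined by exactly one edge, namely $ij^{n-1}\sim ji^{n-1}$. The extreme vertices $i^n$ have degree $k-1$, and every other vertex has degree $k$. Moreover, each vertex $u=u_1\cdots u_n$ lies in the clique $u_1\cdots u_{n-1}[k]\cong K_k$ and has at most one further neighbour outside it.

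\emph{Case $n=1$.} Here $S_k^1=K_k$. For a set $D$ and nonempty $X\subset D$ we have $N[X]=V$, so \eqref{eq:security} reduces to $|D|\ge k-|D|$. Hence the minimum size of a secure set, which is automatically dominating, is $\lceil k/2\rceil$.

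\emph{Security number.} For the upper bound, take $D=\{1^{n-1}j: j\neq 1\}$, the clique at the extreme vertex $1^n$ with $1^n$ itself removed.
\begin{itemize}
\item For nonempty $X\subset D$, the set $N[X]$ consists of the whole clique $1^{n-1}[k]$ together with the external neighbours of the vertices of $X$.
\item Each vertex has at most one external neighbour, so $|N[X]\setminus D|\le 1+|X|$ while $|N[X]\cap D|=k-1$.
\item For $|X|$ close to $k-1$ this count is not good enough, so it must be refined: for $j\neq1$ the external neighbour of $1^{n-1}j$ is $1^{n-2}j1$, which lies inside the subclique $1^{n-2}j[k]$ when $n\ge 2$, and these neighbours are distinct.
\end{itemize}
So $|N[X]\setminus D|=1+|X|$, which can reach $k$. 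The balanced choice therefore needs adjusting, probably by replacing one clique vertex or by adding $1^n$ and deleting another. We would test small cases first. The lower bound $s\ge k-1$ should come from taking $X=D$ for a minimum secure set $D$ and using that the minimum degree is $k-1$: the inequality $|N[D]\cap D|\ge |N[D]\setminus D|$, combined with $|N[D]|\ge |D|+\delta$-type boundary estimates, forces $|D|\ge k-1$. This is the main obstacle.

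\emph{Secure domination.}
\begin{itemize}
\item \emph{Lower bound.} Let $D$ be secure dominating and take $X=D$. Then $N[X]=V$, so $|D|\ge k^n/2$, i.e.\ $\gamma_s\ge\lceil k^n/2\rceil$.
\item \emph{Even $k$.} Take in every clique $w[k]$ the vertices whose last letter lies in a fixed half $H\subset[k]$ with $|H|=k/2$. Then $|D|=k^n/2$. For any $X$, the set $N[X]$ is a union of full cliques, each split evenly, plus external neighbours. We would pair external neighbours not in $D$ with in-clique vertices of $D$ to verify \eqref{eq:security}.
\item \emph{Odd $k$, $n=2$.} The target is $k\lceil k/2\rceil-1$. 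For the construction, take $\lceil k/2\rceil$ vertices in each of the $k$ copies $iS_k^1$, and drop one vertex exploiting a bridge edge. For the lower bound, apply \eqref{eq:security} with $X=D\cap iS_k^1$ to each copy. A copy containing only $(k-1)/2$ vertices of $D$ must gain support through bridge edges, and a counting argument over the bridges should show that at most one copy can be deficient.
\end{itemize}
The odd-$k$ lower bound counting argument is the second hard step.
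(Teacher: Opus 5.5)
The paper gives no proof of this theorem; it is quoted from the literature, so your plan has to stand on its own. The $n=1$ case and the bound $\gamma_s\ge\lceil k^n/2\rceil$ are fine. However, two steps you commit to are false.

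First, your even-$k$ set is not secure. If $a\in H$, $b\notin H$ and $x=wba$ with $w\in[k]^{n-2}$, then the only neighbour of $x$ outside its clique is $wab\notin D$. So $X=\{x\}$ gives $|N[x]\cap D|=k/2<k/2+1=|N[x]\setminus D|$; already $D=\{11,21\}$ fails in $S_2^2$ at $X=\{21\}$. Since every clique is exactly half full there is no slack, so $D$ must be closed under external neighbours. One working choice: fix a $\frac{k}{2}$-regular graph $G$ on $[k]$ and let $D$ be the set of vertices whose last two letters form an edge of $G$. The external neighbour of $wij$ is $wji$, every clique contains $k/2$ vertices of $D$, and $N[X]\setminus D$ lies inside the cliques meeting $X$, which gives \eqref{eq:security}.

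Second, for odd $k$ and $n=2$, the claim that at most one copy can be deficient is false, even for minimum sets. For $k=5$, the set $D=\{13,15,24,25,31,33,34,42,43,44,51,52,53,55\}$ has $14=k\lceil k/2\rceil-1$ vertices and two copies containing only $r=2$ vertices of $D$. It is nevertheless secure dominating by Proposition~\ref{prop:suff-low}, since $c_3=t_3=1$, $c_4=t_4=1$, $c_5=t_5=2$.

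What is needed instead is a charging argument. Here $\mathcal{E}=\varnothing$, because every vertex of $S_k^1$ is extreme. Let $L$ be the set of deficient copies and $\ell=|L|$.
By Lemmas~\ref{lem:minocc} and~\ref{lem:minmin}, each deficient copy has $r$ bridge partners in non-deficient copies, so $\sum_{j\notin L}t_j=r\ell$. By Lemmas~\ref{lem:capacity} and~\ref{lem:excess}, for $j\notin L$ the number $x_j$ is odd and satisfies $x_j\ge\max(1,t_j)$. Hence $2|D|-k^2\ge-\ell+\max(k-\ell,r\ell)\ge k-3$ for every $\ell$. Since $2|D|-k^2$ is odd, this gives $2|D|-k^2\ge k-2$.

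Your construction (``drop one vertex exploiting a bridge'') also has to be made precise. The $r$ vertices of the deficient copy must all have selected partners in copies with $b_j=0$. For example, take $G$ from Lemma~\ref{lem:regular}(2). For each edge $\{i,j\}\in E(G)$ select the vertices $ij$ and $ji$ of $S_k^2$, and select $jj$ for all but one $j$ with $\deg_G(j)=r$.

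Finally, for $s(S_k^n)$ you give neither bound. The repair you guess does work. Let $Q$ be the terminal clique of $1^n$, take $j_0\ne 1$, and set $D=Q\setminus\{1^{n-1}j_0\}$. Then $|N[X]\cap D|=k-1\ge 1+|X\setminus\{1^n\}|=|N[X]\setminus D|$.

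The lower bound is not a real obstacle. Suppose $|D|\le k-2$ and take $X=D$. If $D$ meets $t\ge 2$ terminal cliques, these already contain $tk-|D|>|D|$ vertices outside $D$. If $D$ lies in one terminal clique, at most one of its vertices is extreme and the others have distinct external neighbours, so $|N[D]\setminus D|\ge (k-|D|)+(|D|-1)=k-1>|D|$. In either case \eqref{eq:security} fails.
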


For odd $k$ and $n>2,$ an upper bound for $\gamma _{s}(S_{k}^{n})$\ was
obtained in \cite{MCS2023}, while its exact value remained open. Our main
result resolves this problem.

\begin{theorem}
\label{thm:main} Let $k\geq 3$ be odd and $n\geq 3$. Then 
\begin{equation*}
\gamma _{s}(S_{k}^{n})=%
\begin{cases}
\dfrac{k^{n}+1}{2}, & k=3\text{\ or\ }k\equiv 1\text{ }(\func{mod}4), \\%
[3mm] 
\dfrac{k^{n}+k^{n-3}}{2}, & k\equiv 3\text{ }(\func{mod}4),\text{\ }k\geq 7.%
\end{cases}%
\end{equation*}
\end{theorem}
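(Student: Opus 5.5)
We need lower and upper bounds. The lower bound should come from a local counting argument, the upper bound from explicit constructions.

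\emph{Lower bound.} Let $D$ be a secure dominating set and $\bar D=V\setminus D$. Taking $X=D$ in \eqref{eq:security} gives $N[D]=V$, hence $|D|\ge |V\setminus D|$ and $|D|\ge \lceil k^n/2\rceil=(k^n+1)/2$, since $k^n$ is odd. This is already the claimed value when $k=3$ or $k\equiv 1\pmod 4$. For $k\equiv 3\pmod 4$ we need more. Decompose $S_k^n$ into the $k^{n-2}$ copies $wS_k^2$ of $S_k^2$, with $w\in[k]^{n-2}$. Each copy $wS_k^2$ is itself a union of $k$ cliques $wiS_k^1\cong K_k$. The plan is to apply \eqref{eq:security} to well-chosen subsets $X=D\cap Q$, where $Q$ ranges over a clique $wiS_k^1$, a copy $wS_k^2$, or a copy $w'S_k^3$. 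Because cliques in $S_k^n$ are joined only through single extreme-vertex bridge edges, $N[X]\setminus Q$ has size at most the number of extreme vertices of $Q$.

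For a clique $Q=K_k$ with $k$ odd, security of $D\cap Q$ with at most one outside neighbor forces $|D\cap Q|\ge (k+1)/2$, unless a bridge vertex lends help. Summing with parity gives an excess of at least one per copy of some level. The point is that for $k\equiv 3\pmod 4$ the integer $(k+1)/2$ is even. The resulting parity obstruction should show that each copy $w'S_k^3$ (there are $k^{n-3}$ of them) contributes
\[
|D\cap w'S_k^3|-|\bar D\cap w'S_k^3|\ge 1 .
\]
Summing over these copies gives $2|D|-k^n\ge k^{n-3}$, which is the bound. For $k=3$ or $k\equiv1\pmod4$ the analogous parity allows perfect balancing, which is consistent with the first formula. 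I expect this local excess lemma to be the main obstacle. It requires a careful case analysis of how the $k$ extreme vertices of a copy of $S_k^3$ and its bridge edges can carry surplus, including the possibility that $D$ misses entire cliques. I would first try to prove that within each $S_k^2$ copy the deficit $|\bar D|-|D|$ is at most $1$, attained only with a specific extreme-vertex pattern. I would then show that $k$ such deficient copies cannot all be glued inside one $S_k^3$ when $(k+1)/2$ is even. This is an Euler-type or parity count on the complete graph $K_k$ of inter-copy bridges, where each bridge orients "surplus" toward one side. For $k\equiv 3\pmod4$ the number of edges $\binom{k}{2}$ and the degree $k-1$ combine so that no balanced orientation exists.

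\emph{Upper bound.} We build $D$ recursively. For $k=3$ or $k\equiv1\pmod 4$, label the cliques so that each clique $K_k$ gets $(k\pm1)/2$ vertices of $D$. The choice is guided by an orientation of the bridge edges in which each vertex of $K_k$ has balanced in- and out-degree. Such an Eulerian orientation exists since $k-1$ is even. A deficient clique borrows exactly one defender across an outgoing bridge, which yields $|D|=(k^n+1)/2$ overall. For $k=3$ we give a small explicit pattern, extended by the recursion $S_3^{n}\to S_3^{n+1}$. For $k\equiv3\pmod4$ we take the same pattern but add one extra vertex of $D$ per copy of $S_k^3$, matching the $k^{n-3}$ excess.

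Verifying \eqref{eq:security} for all $X$ is done by reducing to $X$ whose closed neighborhood is a union of cliques plus bridge endpoints. Security is checked clique-by-clique using a Hall-type defense: every attacker in $\bar D$ is matched to a distinct defender. Constructing an explicit attack/defense matching is easier than checking all $X$. Finally, Theorem~\ref{thm:known-menon} for $n=2$ serves as a sanity check on the base of the recursion.
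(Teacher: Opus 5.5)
\textbf{Gap 1: the lower bound.} Your lower bound for $k\equiv 3\pmod 4$ rests on a per-copy lemma that is false. A copy $w'S_k^3$ can have $|D\cap w'S_k^3|-|\bar D\cap w'S_k^3|$ as small as $-k$. Take $n=5$ and $G=12S_k^3$. None of its extreme vertices $12bbb$ is extreme in $S_k^5$. Their outside neighbors $1b222$ ($b\neq 2$) and $21111$ lie in $k$ different copies. Leave all $k$ terminal cliques $Q_{12bb}$ empty. In each $Q_{12ba}$ ($a\neq b$), select $12bab$, which faces the empty clique $Q_{12bb}$. Also select the $r$ endpoints of doubly selected bridges given by an $r$-regular spanning subgraph of the $K_{2r}$ on $\{12ba:a\neq b\}$. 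In the $k$ neighboring copies, use the construction of Theorem~\ref{thm:n3} relabeled so that the extreme vertex adjacent to $G$ becomes $000$, and add that vertex to $D$. In all other copies, use the construction unchanged; it contains no extreme vertex since $r$ is odd. Every nonempty terminal clique then has $s_u\ge r+1$ and $c_u\ge 0$, so $D$ is secure dominating by Proposition~\ref{prop:suff-high}. Yet $2|D\cap G|-k^3=k(k-1)(k+1)-k^3=-k$.

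Each of the $k$ copies $12bS_k^2$ inside $G$ has deficit exactly $1$. So your claim that $k$ deficient $S_k^2$-copies cannot be glued inside one $S_k^3$ also fails: the deficit is exported through the extreme vertices of $G$ into neighboring copies. The lower bound therefore has to be global. The paper charges the $-k$ of each empty clique to its $k$ neighbors via $x_u=2s_u-k-e_u$, and sums over the $k^{n-2}$ cliques $C_z$ of $S_k^{n-1}$. It then proves:
\begin{itemize}
\item $\delta(C_z)\ge 1$ for $z\in U$;
\item $\delta(C_z)\ge 2$ when also $d_z=0$ --- this is where the parity obstruction lives: an $r$-regular spanning subgraph of $K_k$ would be needed, and $kr$ is odd;
\item $\delta(C_z)$ is at least the number of arcs from $Z$ into $z$, for $z\in Z$.
\end{itemize}
Finally it combines $\Delta\ge |U|$ with $\Delta\ge 2|U|+B-d=q-(k-1)|U|$ to get $k\Delta\ge q$. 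This averaging over $|U|$ is what replaces your per-copy inequality.

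\textbf{Gap 2: the upper bound.} Your upper-bound scheme cannot work for $k\ge 5$. A terminal clique with $r=(k-1)/2$ selected vertices cannot get by with one borrowed defender. By Lemma~\ref{lem:minocc}, each of its $r$ selected vertices needs its own selected outside neighbor. By Lemma~\ref{lem:minmin}, that neighbor lies in a clique $Q_w$ with $s_w\ge r+1$. If no clique is empty, this $w$ has $x_w=2s_w-k$ odd with $x_w\ge t_w$ (Lemmas~\ref{lem:capacity} and~\ref{lem:excess}). Let $L$ be the set of cliques with $s=r$. Then $\Delta\ge\max\{k^{n-1}-2|L|,\,(r-1)|L|\}\ge\frac{r-1}{r+1}k^{n-1}$, far above $1$.

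So for $k\equiv 1\pmod 4$ you must leave about $k^{n-1}/(k+1)$ independent terminal cliques empty; the paper indexes them by the codes $P_i^m$ or $B_i^m$ in $S_k^{n-1}$. Every other clique gets exactly $r+1$ vertices: one facing each empty neighbor, and the rest on doubly selected bridges (or extreme vertices) forming an $r$-regular factor, obtained from Petersen's theorem after padding. The $\{r,r+1\}$ pattern works only for $k=3$, along a Hamiltonian cycle.

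For $k\equiv 3\pmod 4$, adding $k^{n-3}$ vertices to a set of size $(k^n+1)/2$ overshoots $(k^n+k^{n-3})/2$. For $n\ge 4$ such a base set does not exist anyway. The paper instead takes disjoint copies of an extreme-vertex-free minimum secure dominating set of $S_k^3$. These do not interact, because inter-copy edges join extreme vertices only (Lemma~\ref{lem:blocks}).
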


\section{Terminal cliques and local inequalities}

The vertices $i^{n}=ii\cdots i$ $(i\in \lbrack k])$\ are called the \emph{%
extreme vertices }of $S_{k}^{n},$ and the remaining vertices are \emph{%
nonextreme}. Each extreme vertex has degree $k-1$, while every nonextreme
vertex has degree $k$.

Fix $n\geq 2$ and write $H:=S_{k}^{n-1}$. For $u=u_{1}\cdots u_{n-1}\in V(H)$
and $a\in \lbrack k]$, let $ua=u_{1}\cdots u_{n-1}a\in V(S_{k}^{n}),$ and
define $Q_{u}:=\{ua:a\in \lbrack k]\}.$ Then 
\begin{equation*}
V(S_{k}^{n})=\bigsqcup_{u\in V(H)}Q_{u},\text{\ }S_{k}^{n}[Q_{u}]\cong K_{k},
\end{equation*}%
where $\bigsqcup $ denotes disjoint union. We call each $Q_{u}$ a \emph{%
terminal clique}. For distinct $u,v\in V(H)$, an edge joins $Q_{u}$ and $%
Q_{v}$ if and only if $uv\in E(H)$, and such an edge\ is unique. We call it
the \emph{bridging edge} corresponding to $uv$. Consequently, contracting
each $Q_{u}$ to a single vertex yields a graph naturally isomorphic to $H$.

If $u=i^{n-1}$ is extreme in $H$, then $i^{n}\in Q_{u}$ is the unique vertex
of $Q_{u}$ having no neighbor outside $Q_{u}$. Every nonextreme vertex of $%
S_{k}^{n}$ has exactly one neighbor outside its terminal clique. These
properties follow directly from the adjacency definition (\ref{eq:sier}).

For the remainder of this section, let $k=2r+1,$ where $r$ is an integer,
and fix an arbitrary vertex set $D\subset V(S_{k}^{n})$. Vertices in $D$ are
called \emph{selected}, and those in $V(S_{k}^{n})\setminus D$ are called 
\emph{unselected}.

A terminal clique $Q_{u}$ is called $D$-\emph{free} if $D\cap
Q_{u}=\varnothing .$ For $u\in V(H),$ define 
\begin{equation*}
s_{u}:=|D\cap Q_{u}|,\text{\ }\mathcal{E}:=\{u\in V(H):s_{u}=0\},
\end{equation*}%
For $u\in V(H)\setminus \mathcal{E}$, define%
\begin{equation*}
e_{u}:=|N_{H}(u)\cap \mathcal{E}|.
\end{equation*}%
Thus $\mathcal{E}$ indexes the $D$-free terminal cliques, and $e_{u}$ counts
the $D$-free terminal cliques adjacent to $Q_{u}$. The notation introduced
in this section will also be used in subsequent sections, relative to the
set $D$ under consideration.

\begin{lemma}
\label{lem:empty}If $D$ is a dominating set, then $\mathcal{E}$ is
independent in $H$ and contains no extreme vertex of $H$. Furthermore, the
following statements hold.

\begin{enumerate}
\item If $v\in \mathcal{E}$ and $uv\in E(H)$, then the endpoint in $Q_{u}$
of the bridging edge corresponding to $uv$ belongs to $D$;

\item $\sum_{u\notin \mathcal{E}}e_{u}=k|\mathcal{E}|.$
\end{enumerate}
\end{lemma}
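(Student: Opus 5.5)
The whole argument rests on the local structure of $S_{k}^{n}$ described above: each vertex $ua$ of $Q_{u}$ has all of its neighbours inside $Q_{u}$, except possibly a single neighbour across a bridging edge. We will read off each assertion of Lemma \ref{lem:empty} from the domination of suitable vertices of a $D$-free clique.

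First, let $v\in\mathcal{E}$. For each $a\in[k]$, the vertex $va$ must be dominated. Its neighbours inside $Q_{v}$ are unselected, and $va\notin D$. Hence $va$ has a neighbour outside $Q_{v}$, and that neighbour lies in $D$.

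$\mathcal{E}$ contains no extreme vertex. If $v=i^{n-1}$ were in $\mathcal{E}$, then $i^{n}\in Q_{v}$ would have no neighbour outside $Q_{v}$, so it could not be dominated.

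$\mathcal{E}$ is independent. If $u,v\in\mathcal{E}$ with $uv\in E(H)$, consider the bridging edge $xy$ with $x\in Q_{v}$ and $y\in Q_{u}$. The only outside neighbour of $x$ is $y$, and $y\notin D$ because $Q_{u}$ is $D$-free. So $x$ is undominated, a contradiction.

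Item (1) is the same argument. Let $v\in\mathcal{E}$ with $uv\in E(H)$, and let $xy$ be the bridging edge with $x\in Q_{v}$ and $y\in Q_{u}$. Since $x$ must be dominated and its only possible dominator is $y$, we get $y\in D$.

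For item (2) we double count the edges of $H$ between $\mathcal{E}$ and $V(H)\setminus\mathcal{E}$. Every $v\in\mathcal{E}$ is nonextreme in $H$, so $\deg_{H}(v)=k$. By independence, all $k$ neighbours of $v$ lie outside $\mathcal{E}$. Summing over $v\in\mathcal{E}$ gives $k|\mathcal{E}|$ such edges. Counting the same edges from the other side gives $\sum_{u\notin\mathcal{E}}e_{u}$.

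The only point needing care is the degree fact that nonextreme vertices of $H=S_{k}^{n-1}$ have degree exactly $k$. This is stated at the beginning of the section and holds because $n-1\ge 1$. When $n-1=1$, $H=K_{k}$ has only extreme vertices, so $\mathcal{E}=\varnothing$ and the identity is trivial. The rest is routine.
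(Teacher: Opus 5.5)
Your proof is correct and follows the paper's argument essentially step for step. Each claim comes from the fact that a vertex of a $D$-free terminal clique can only be dominated across its unique bridging edge, and (2) is the same double count of edges between $\mathcal{E}$ and $V(H)\setminus\mathcal{E}$, using $\deg_H(v)=k$ and independence.
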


\begin{proof}
Suppose that $u,v\in \mathcal{E}$ are adjacent in $H$. Since both $Q_{u}$
and $Q_{v}$ are $D$-free, the endpoint in $Q_{u}$ of the bridging edge
corresponding to $uv$ has no neighbor in $D$. This contradicts the
assumption that $D$ is dominating. Hence $\mathcal{E}$ is independent in $H$.

If $i^{n-1}\in \mathcal{E}$ for some $i\in \lbrack k]$, then $i^{n}\in
Q_{i^{n-1}}$ has no neighbor in $D$, since it has no neighbor outside $%
Q_{i^{n-1}}$. This is again a contradiction. Thus $\mathcal{E}$ contains no
extreme vertex of $H$, and consequently $\deg _{H}(v)=k$\ for $v\in \mathcal{%
E}.$

For (1), let $v\in \mathcal{E}$ and $uv\in E(H)$. Since $Q_{v}$ is $D$-free,
the endpoint $q_{v}$ in $Q_{v}$ of the bridging edge $q_{u}q_{v}$
corresponding to $uv$ must be dominated from outside $Q_{v}$. It follows
that $q_{u}\in D$.

Finally, counting the edges between $\mathcal{E}$ and $V(H)\setminus 
\mathcal{E}$ from both sides gives (2).
\end{proof}

\begin{lemma}
\label{lem:minocc}If $D$ is a secure dominating set, then $s_{u}\geq r$ for
every $u\notin \mathcal{E}$. Moreover, if $s_{u}=r$, then $e_{u}=0$, no
extreme vertex of $S_{k}^{n}$ in $Q_{u}$ belongs to $D$, and every vertex of 
$D\cap Q_{u}$ has a neighbor in $D\setminus Q_{u}$.
\end{lemma}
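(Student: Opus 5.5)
Apply the security inequality (\ref{eq:security}) to the single test set $X:=D\cap Q_{u}$, which is nonempty because $u\notin \mathcal{E}$, and count $N[X]$ carefully. Since $Q_{u}$ is a clique, $N[X]\supseteq Q_{u}$, so $N[X]\setminus D$ contains the $k-s_{u}$ unselected vertices of $Q_{u}$. Outside $Q_{u}$, each vertex $x\in X$ has at most one neighbor, namely the other endpoint of its bridging edge (none if $x$ is extreme in $S_{k}^{n}$). Let $a$ be the number of $x\in X$ whose outside neighbor lies in $D$, and $b$ the number whose outside neighbor is unselected. Distinct vertices of $Q_{u}$ use distinct bridging edges, so these outside neighbors are pairwise distinct. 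Hence
\begin{equation*}
|N[X]\cap D|=s_{u}+a,\qquad |N[X]\setminus D|=(k-s_{u})+b,
\end{equation*}
and $a+b\leq s_{u}$, with the deficit $s_{u}-a-b$ equal to the number of extreme vertices of $S_{k}^{n}$ in $X$.

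Security gives $s_{u}+a\geq k-s_{u}+b$, that is, $2s_{u}\geq k+b-a\geq k-s_{u}$. This is not yet enough, so I refine it. Since $a\leq s_{u}$, we get $2s_{u}\geq k-a\geq k-s_{u}$, which only yields $s_{u}\geq k/3$. The correct move is to use $a\leq s_{u}$ directly in $2s_{u}+a\geq k+b$: in the worst case $b=0$, this reads $3s_{u}\geq k$. That bound is too weak, so $X$ alone is insufficient.

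To sharpen, I enlarge $X$. Let $Y\subseteq D\setminus Q_{u}$ be the set of outside selected neighbors of $X$ counted by $a$, and test $X'=X\cup Y$ instead. Each $y\in Y$ lies in its own terminal clique $Q_{v}$ with $s_{v}\geq1$, and $N[y]$ then contains all of $Q_{v}$, which brings in unselected vertices too. This cascades, so it is not clean either.

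A better route is to choose $X$ differently. Combining terminal cliques via the contracted graph $H$ is what later lemmas do, whereas here the claim is local, so I reconsider the count. The inequality $2s_{u}+a-b\geq k$ with $a\leq s_{u}$ gives $s_{u}\geq \lceil k/3\rceil$, not $r$. To get $r$, I instead test a single vertex $X=\{x\}$ for $x\in D\cap Q_{u}$. Then $N[x]\supseteq Q_{u}$, so $|N[x]\cap D|\leq s_{u}+1$ and $|N[x]\setminus D|\geq k-s_{u}$. This gives $s_{u}+1\geq k-s_{u}$, i.e. $2s_{u}\geq 2r$, so $s_{u}\geq r$. This is the clean argument for the first claim.

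For the equality case $s_{u}=r$, every $x\in D\cap Q_{u}$ must satisfy $r+[\text{outside nbr of }x\in D]\geq r+1+[\text{outside nbr unselected}]$. So $x$ must have an outside neighbor, which means $x$ is not extreme, and that neighbor must lie in $D$. This proves the last two assertions. For $e_{u}=0$, suppose $v\in\mathcal{E}$ is adjacent to $u$ in $H$. By Lemma \ref{lem:empty}(1), the endpoint $q_{u}\in Q_{u}$ of the bridging edge lies in $D$, and its outside neighbor lies in the $D$-free clique $Q_{v}$, hence is unselected. This contradicts what was just shown, so $e_{u}=0$.

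The main obstacle is choosing the right test set. Singleton sets $X=\{x\}$ are the right choice, and the rest is bookkeeping using that $Q_{u}$ is a clique and each vertex has at most one outside neighbor.
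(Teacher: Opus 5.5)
Your final argument is correct and is essentially the paper's proof. You test security on a singleton $X=\{x\}$ with $x\in D\cap Q_u$, use $Q_u\subseteq N[x]$ and the fact that $x$ has at most one neighbor outside $Q_u$ to get $2s_u\geq k-1$, then for $s_u=r$ conclude that this outside neighbor must exist and be selected, and obtain $e_u=0$ from Lemma~\ref{lem:empty}(1); the earlier attempts with $X=D\cap Q_u$ and its enlargements are unnecessary and can be deleted.
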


\begin{proof}
Let $u\notin \mathcal{E}$ and $x\in D\cap Q_{u}$. Since $x$ has at most one
neighbor outside $Q_{u}$, the security inequality (\ref{eq:security}) gives%
\begin{equation*}
0\leq |N[x]\cap D|-|N[x]\setminus D|=2s_{u}-k+|(N(x)\setminus Q_{u})\cap
D|-|(N(x)\setminus Q_{u})\setminus D|\leq 2s_{u}-k+1.
\end{equation*}%
Hence $s_{u}\geq (k-1)/2=r$.

Assume $s_{u}=r$. It follows that 
\begin{equation*}
1\leq |(N(x)\setminus Q_{u})\cap D|-|(N(x)\setminus Q_{u})\setminus D|\leq
|(N(x)\setminus Q_{u})\cap D|\leq 1.
\end{equation*}%
Consequently, $|(N(x)\setminus Q_{u})\cap D|=1.$ Hence every vertex of $%
D\cap Q_{u}$ has a neighbor in $D\setminus Q_{u}$. In particular, no extreme
vertex of $S_{k}^{n}$ in $Q_{u}$ belongs to $D$.

Suppose that $e_{u}>0$. Then there exists $v\in \mathcal{E}$ with $uv\in
E(H) $. By Lemma \ref{lem:empty}(1), the endpoint in $Q_{u}$ of the bridging
edge corresponding to $uv$ belongs to $D$. However, its unique outside
neighbor lies in the $D$-free terminal clique $Q_{v}$, which contradicts the
preceding conclusion. Thus $e_{u}=0$.
\end{proof}

\begin{lemma}
\label{lem:minmin}Suppose that $D$ is a secure dominating set. Then, for
every $uv\in E(H)$ with $s_{u}=s_{v}=r$, the bridging edge corresponding to $%
uv$ has at least one endpoint outside $D$.
\end{lemma}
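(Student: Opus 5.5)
We argue by contradiction. Suppose $uv\in E(H)$ with $s_u=s_v=r$, and let $q_uq_v$ be the corresponding bridging edge with $q_u\in Q_u$, $q_v\in Q_v$ and both $q_u,q_v\in D$. The idea is to apply the security inequality (\ref{eq:security}) to $X:=D\cap(Q_u\cup Q_v)$ and show that $N[X]$ contains more unselected than selected vertices.

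First we compute $N[X]$. Since $s_u=r\geq 1$, the set $X\cap Q_u$ is nonempty, and so $N[X]\supseteq Q_u\cup Q_v$. This contributes $2r$ selected and $2(k-r)=2r+2$ unselected vertices. Every other vertex of $N[X]$ is the outside neighbor of some $x\in X$. By Lemma \ref{lem:minocc}, each $x\in X$ has a neighbor in $D\setminus Q_{u}$ (respectively $D\setminus Q_v$), and this neighbor is its unique outside neighbor. So every vertex of $N[X]\setminus(Q_u\cup Q_v)$ is selected. For $q_u$ and $q_v$, the outside neighbors lie in $Q_v$ and $Q_u$ respectively, so they contribute nothing new. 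Each of the remaining $2r-2$ vertices of $X$ contributes at most one new selected vertex.

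Combining these counts,
\[
|N[X]\cap D|\leq 2r+(2r-2)=4r-2,\qquad |N[X]\setminus D|=2r+2 .
\]
This is not yet a contradiction when $r\geq 2$, so the choice $X=D\cap(Q_u\cup Q_v)$ is wrong. The extra selected vertices come from the outside neighbors, so we should take a smaller $X$ that limits them.

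Take instead $X=\{q_u,q_v\}$. Then $N[X]=Q_u\cup Q_v$, since each of $q_u,q_v$ has its only outside neighbor in the other clique. This gives $|N[X]\cap D|=2r$ and $|N[X]\setminus D|=2(k-r)=2r+2>2r$, which violates (\ref{eq:security}). Hence at least one of $q_u,q_v$ lies outside $D$, which proves the lemma.

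The only delicate point is the identity $N[\{q_u,q_v\}]=Q_u\cup Q_v$. It holds because each nonextreme vertex has exactly one neighbor outside its terminal clique, and for $q_u$ and $q_v$ that neighbor is the other endpoint of the bridging edge. We also need $q_u$ and $q_v$ to be nonextreme, which is true because each is an endpoint of a bridging edge.
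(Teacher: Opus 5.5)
Your final argument is correct and is exactly the paper's proof: take $X=\{q_u,q_v\}$, note $N[X]=Q_u\cup Q_v$, and count $2r$ selected against $2r+2$ unselected vertices, contradicting (\ref{eq:security}). The earlier attempt with $X=D\cap(Q_u\cup Q_v)$ plays no role in the proof and can simply be deleted.
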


\begin{proof}
Suppose that the bridging edge $xy$ corresponding to $uv$, with $x\in Q_{u}$
and $y\in Q_{v}$, has both endpoints in $D$. It follows that $N[x]=Q_{u}\cup
\{y\},$ $N[y]=Q_{v}\cup \{x\}.$ Hence, for $X=\{x,y\}\subset D$, one has $%
N[X]=Q_{u}\cup Q_{v}.$ Since $Q_{u}\cap Q_{v}=\varnothing $ and $%
s_{u}=s_{v}=r$, we obtain $|N[X]\cap D|=2r$ and $|N[X]\setminus
D|=2(k-r)=2r+2$, which contradicts (\ref{eq:security}).
\end{proof}

For a terminal clique $Q_{u}$ with $s_{u}\geq r+1$, define 
\begin{align*}
b_{u}& =|\{x\in D\cap Q_{u}:x\text{ has a neighbor }x^{\prime }\notin Q_{u}%
\text{ and }x^{\prime }\notin D\}|, \\
c_{u}& =2s_{u}-k-b_{u},
\end{align*}%
and let $t_{u}$ denote the number of vertices $v\in N_{H}(u)$ such that $%
s_{v}=r$ and both endpoints of the bridging edge corresponding to $uv$
belong to $D$.

\begin{lemma}
\label{lem:capacity}Suppose that $D$ is a secure dominating set. If $%
s_{u}\geq r+1$, then $c_{u}\geq t_{u}\geq 0.$
\end{lemma}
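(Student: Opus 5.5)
The plan is to apply the security inequality (\ref{eq:security}) to a single well-chosen subset $X\subset D\cap Q_u$ together with its partners across bridging edges. Let $T$ be the set of vertices $y\in D$ lying in terminal cliques $Q_v$ with $v\in N_H(u)$, $s_v=r$, such that the bridging edge $xy$ corresponding to $uv$ has both endpoints in $D$. Then $|T|=t_u$. Let $X_u$ be the set of corresponding endpoints $x\in D\cap Q_u$. I would take
\begin{equation*}
X:=(D\cap Q_u)\cup T.
\end{equation*}
This set is nonempty because $s_u\ge r+1\ge 1$. The inequality $t_u\ge 0$ is trivial, so everything reduces to computing $|N[X]\cap D|-|N[X]\setminus D|$ and showing it is at most $c_u-t_u$.

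First I compute the contribution of $D\cap Q_u$. Its closed neighborhood is $Q_u$ together with the outside neighbors of the selected vertices of $Q_u$. Each vertex of $Q_u$ has at most one outside neighbor, so:
\begin{itemize}
\item $Q_u$ contributes $s_u$ selected and $k-s_u$ unselected vertices;
\item exactly $b_u$ of the outside neighbors are unselected;
\item the remaining outside neighbors are selected, and at most $s_u-b_u$ of them occur, including the $t_u$ vertices of $T$.
\end{itemize}

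Next, adding $T$ brings in, for each $y\in T$ lying in $Q_v$, the rest of $Q_v$. By Lemma \ref{lem:minocc}, $y$ is nonextreme and its unique outside neighbor is its partner $x\in Q_u$. So $N[y]=Q_v\cup\{x\}$, and $Q_v$ adds $r-1$ new selected vertices (beyond $y$) and $k-r=r+1$ unselected ones. The cliques $Q_v$ are pairwise distinct, disjoint from $Q_u$ and from each other, since each bridging edge is unique for the pair $uv$. Could an unselected vertex of $Q_v$ already be counted among the $b_u$ outside neighbors? No: only one vertex of $Q_v$ is adjacent to $Q_u$, and that vertex is $y$, which is selected.

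Collecting the counts, the selected vertices of $N[X]$ number at most $s_u+(s_u-b_u)+t_u(r-1)$. The unselected vertices number exactly $(k-s_u)+b_u+t_u(r+1)$. Security (\ref{eq:security}) therefore gives
\begin{equation*}
0\le 2s_u-k-2b_u+s_u-2t_u .
\end{equation*}
This is not yet of the form $c_u\ge t_u$, because the selected outside neighbors were overcounted by allowing up to $s_u-b_u$ of them.

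This bookkeeping is where I expect the main obstacle. The selected outside neighbors of $D\cap Q_u$ that are not in $T$ inflate the left side. The fix is to bound more cleverly, taking $X$ smaller, or to use Lemma \ref{lem:minocc}/\ref{lem:minmin} structure. I would first try restricting $X$ to $T\cup X_u$ plus the minimal set needed. Intuitively, the selected vertices of $Q_u$ counted in neither $b_u$ nor $X_u$ can be paired against the unselected vertices of $Q_u$.

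A cleaner route is to avoid outside selected neighbors altogether by choosing $X=T$ alone together with $X_u$. Then $N[X]=Q_u\text{-part}\cup\bigcup Q_v$ plus the outside neighbors of the vertices of $X_u$, which are exactly $T$. This gives selected count $s_u+t_u(r-1)+t_u\cdot 1$ and unselected count $(k-s_u)+t_u(r+1)$, hence $2s_u-k\ge t_u$. That is weaker than needed, since $c_u$ subtracts $b_u$.

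So the correct $X$ should be $X_u\cup T\cup B$, where $B$ is the set of $b_u$ selected vertices of $Q_u$ with unselected outside neighbors. This choice brings in exactly $b_u$ extra unselected vertices and no extra selected ones. The count is then selected $=s_u+t_u r$ and unselected $=k-s_u+b_u+t_u(r+1)$, giving $2s_u-k-b_u-t_u\ge 0$, i.e. $c_u\ge t_u$. I would carry out this last version carefully, checking that $X$ is nonempty; when both $X_u$ and $B$ are empty, pick any $x\in D\cap Q_u$, whose outside neighbor is selected or absent and so only helps. I would also recheck the disjointness claims.
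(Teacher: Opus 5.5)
Your final choice $X=X_u\cup T\cup B$ is correct and is essentially the paper's proof. The paper uses $B\cup T$ when $b_u>0$, and $T$ together with a single vertex of $X_u$ when $b_u=0<t_u$. Each of these has the same closed neighborhood as your $X$: $Q_u$, the $t_u$ cliques $Q_v$, and the $b_u$ unselected outside neighbors of $B$. This gives $s_u+t_ur\ge (k-s_u)+b_u+t_u(r+1)$.

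The only slip is in the degenerate case $X_u=B=\varnothing$. There, security applied to a single $x\in D\cap Q_u$ whose outside neighbor is selected gives only $2s_u-k\ge -1$: an extra selected vertex in $N[X]$ weakens the derived bound rather than helping. However, in that case $t_u=b_u=0$, and $c_u=2s_u-k\ge 1$ follows directly from $s_u\ge r+1$, with no security inequality needed.
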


\begin{proof}
Fix $u$ with $s_{u}\geq r+1$, and write $t:=t_{u}$. Let $v_{1},\ldots
,v_{t}\in N_{H}(u)$ be the distinct vertices counted by $t_{u}$. For each $%
1\leq i\leq t$, let $x_{i}\in D\cap Q_{v_{i}}$ and $y_{i}\in D\cap Q_{u}$ be
the endpoints of the bridging edge corresponding to $uv_{i}$.

Assume that $b_{u}>0$, and let $B_{u}\subset D\cap Q_{u}$ be the set counted
by $b_{u}$. Then $y_{i}\notin B_{u}$. For each $x\in B_{u},$ its unique
neighbor outside $Q_{u}$ does not belong to $D.$ Moreover, these outside
neighbors are pairwise distinct,\ and lie outside $%
\bigcup_{i=1}^{t}Q_{v_{i}}.$ Set $X:=B_{u}\cup \{x_{1},\ldots ,x_{t}\}.$
Then we have 
\begin{equation*}
N[X]=Q_{u}\sqcup \bigsqcup_{i=1}^{t}Q_{v_{i}}\sqcup \bigsqcup_{x\in
B_{u}}(N(x)\setminus Q_{u}).
\end{equation*}%
The last union consists of exactly the $b_{u}$ unselected outside neighbors
of $B_{u}$.\ Hence (\ref{eq:security}) gives%
\begin{equation*}
s_{u}+tr=|N[X]\cap D|\geq |N[X]\setminus D|=(k-s_{u})+b_{u}+t(k-r).
\end{equation*}%
Since $k=2r+1$, it follows that $c_{u}=2s_{u}-k-b_{u}\geq t.$

Assume that $b_{u}=0$. The case $t=0$ is immediate. If $t>0$, consider $%
\{y_{1},x_{1},\ldots ,x_{t}\}\subset D$. Then $N[\{y_{1},x_{1},\ldots
,x_{t}\}]=Q_{u}\sqcup \bigsqcup_{i=1}^{t}Q_{v_{i}}.$ Thus (\ref{eq:security}%
) gives $s_{u}+tr\geq (k-s_{u})+t(k-r),$ and hence $c_{u}=2s_{u}-k\geq t.$
\end{proof}

For $u\notin \mathcal{E}$, define $x_{u}:=2s_{u}-k-e_{u},$ and write $\Delta
:=2|D|-k^{n}.$

\begin{lemma}
\label{lem:excess}If $D$ is a secure dominating set, then $\Delta
=\sum_{u\notin \mathcal{E}}x_{u}.$ Moreover, for every $u\in V(H)\setminus 
\mathcal{E},$ the following statements hold.

\begin{enumerate}
\item if $s_{u}=r$, then $x_{u}=-1$;

\item if $s_{u}\geq r+1$, then $x_{u}=c_{u}+a_{u}\geq 0,$\ $%
a_{u}:=b_{u}-e_{u}\geq 0;$

\item $x_{u}\equiv 1+e_{u}$ $(\func{mod}2).$
\end{enumerate}
\end{lemma}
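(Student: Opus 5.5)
The identity $\Delta=\sum_{u\notin\mathcal{E}}x_u$ follows by a direct count. The terminal cliques partition $V(S_k^n)$ and $D$-free cliques contribute nothing to $|D|$, so
\begin{equation*}
2|D|=\sum_{u\notin\mathcal{E}}2s_u .
\end{equation*}
Since $|V(H)|=k^{n-1}$, we have $k^{n}=k\,|V(H)|=\sum_{u\notin\mathcal{E}}k+k|\mathcal{E}|$. Lemma \ref{lem:empty}(2) lets us replace $k|\mathcal{E}|$ by $\sum_{u\notin\mathcal{E}}e_u$. Subtracting gives
\begin{equation*}
\Delta=\sum_{u\notin\mathcal{E}}(2s_u-k-e_u)=\sum_{u\notin\mathcal{E}}x_u .
\end{equation*}

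For (1), suppose $s_u=r$. Lemma \ref{lem:minocc} gives $e_u=0$, so $x_u=2r-k=-1$.

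For (3), note that $k$ is odd, so $x_u=2s_u-k-e_u\equiv 1+e_u \pmod 2$.

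The only real content is (2), and within it the inequality $b_u\ge e_u$. Once that holds, the definitions give
\begin{equation*}
x_u=2s_u-k-e_u=c_u+b_u-e_u=c_u+a_u,
\end{equation*}
and $c_u\ge t_u\ge 0$ by Lemma \ref{lem:capacity}, so $x_u\ge 0$. To prove $b_u\ge e_u$, I will build an injection from the $e_u$ neighbors $v\in N_H(u)\cap\mathcal{E}$ into the set counted by $b_u$. For each such $v$, let $y_v$ be the endpoint in $Q_u$ of the bridging edge corresponding to $uv$. By Lemma \ref{lem:empty}(1), $y_v\in D$. Its outside neighbor lies in the $D$-free clique $Q_v$, so that neighbor is not in $D$. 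Hence $y_v$ is counted by $b_u$. Distinct $v$ give distinct $y_v$, because each nonextreme vertex of $Q_u$ has exactly one neighbor outside $Q_u$, and distinct bridging edges therefore have distinct endpoints in $Q_u$. So the map $v\mapsto y_v$ is injective and $b_u\ge e_u$.

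The main thing to check carefully is this injectivity, namely that different neighbors $v$ really correspond to different vertices of $Q_u$. This follows from the uniqueness of bridging edges stated before Lemma \ref{lem:empty}. Everything else is bookkeeping.
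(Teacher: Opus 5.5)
Your proposal is correct and follows the paper's proof essentially line for line: the same double count via Lemma \ref{lem:empty}(2), Lemma \ref{lem:minocc} for (1), the parity of $k$ for (3), and Lemma \ref{lem:empty}(1) together with Lemma \ref{lem:capacity} for (2). Your explicit injectivity check adds detail the paper leaves implicit, but it rests on each vertex of $Q_u$ having at most one neighbor outside $Q_u$, which you state, and not on the uniqueness of the bridging edge between a fixed pair of cliques, which your closing sentence cites instead.
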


\begin{proof}
By Lemma\ \ref{lem:empty}(2), one has%
\begin{equation*}
\sum_{u\notin \mathcal{E}}x_{u}=2\sum_{u\notin \mathcal{E}}s_{u}-k(k^{n-1}-|%
\mathcal{E}|)-\sum_{u\notin \mathcal{E}}e_{u}=2|D|-k(k^{n-1}-|\mathcal{E}%
|)-k|\mathcal{E}|=\Delta .
\end{equation*}%
If $s_{u}=r$, then Lemma \ref{lem:minocc} gives $e_{u}=0$, and hence $%
x_{u}=-1$. Suppose that $s_{u}\geq r+1$. By Lemma \ref{lem:empty}(1), every
edge from $u$ to $\mathcal{E}$ corresponds to a selected endpoint in $Q_{u}$
whose outside neighbor is unselected. It follows that $b_{u}\geq e_{u}$ and $%
a_{u}\geq 0.$ Thus $x_{u}=c_{u}+a_{u}\geq 0$ by Lemma \ref{lem:capacity}.
Since $k$ is odd, $x_{u}\equiv 1+e_{u}$ $(\func{mod}2)$.
\end{proof}

We next give two sufficient conditions for a dominating set to be secure,
which will be used in the subsequent constructions.

\begin{proposition}
\label{prop:suff-high}Let $D$ be a dominating set. Suppose that $s_{u}\geq
r+1$ and $c_{u}\geq 0$ for every $u\in V(H)\setminus \mathcal{E}.$ Then $D$
is secure.
\end{proposition}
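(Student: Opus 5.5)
The plan is to verify the security condition (\ref{eq:security}) directly. I will fix an arbitrary nonempty $X\subset D$ and decompose it along terminal cliques. Write $X_u:=X\cap Q_u$ and $U:=\{u\in V(H):X_u\neq\varnothing\}$. Since $X\subset D$, every $u\in U$ satisfies $s_u\geq 1$, so $U\cap\mathcal{E}=\varnothing$. The hypotheses therefore give $s_u\geq r+1$ and $c_u\geq 0$ for every $u\in U$, and in particular $b_u$ and $c_u$ are defined for these $u$.

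The first step bounds $|N[X]\cap D|$ from below. Because $S_k^n[Q_u]\cong K_k$, we have $N[X_u]\supseteq Q_u$ for each $u\in U$. Hence
\begin{equation*}
N[X]=\bigsqcup_{u\in U}Q_u\ \sqcup\ W,
\end{equation*}
where $W$ consists of the vertices of $N[X]$ lying outside $\bigcup_{u\in U}Q_u$. The terminal cliques are pairwise disjoint, so
\begin{equation*}
|N[X]\cap D|\geq\sum_{u\in U}s_u .
\end{equation*}

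The second step bounds $|N[X]\setminus D|$ from above. Inside the cliques, the unselected vertices number exactly $\sum_{u\in U}(k-s_u)$. Every vertex of $W$ is the unique outside neighbour of some $x\in X$, since each vertex has at most one neighbour outside its terminal clique and extreme vertices have none. Let $x\in X_u$ have an unselected outside neighbour $x'$. Then $x$ is a vertex counted by $b_u$, because $x\in D\cap Q_u$ and $x'\notin Q_u$, $x'\notin D$. Assigning to each vertex of $W\setminus D$ one such preimage $x$ gives an injection, since each $x$ has at most one outside neighbour. This injection lands in $\bigcup_{u\in U}\{\text{vertices counted by }b_u\}$, so
\begin{equation*}
|N[X]\setminus D|\leq\sum_{u\in U}\bigl((k-s_u)+b_u\bigr).
\end{equation*}

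Combining the two bounds yields
\begin{equation*}
|N[X]\cap D|-|N[X]\setminus D|\geq\sum_{u\in U}(2s_u-k-b_u)=\sum_{u\in U}c_u\geq 0,
\end{equation*}
which is exactly (\ref{eq:security}) for $X$. Since $X$ was an arbitrary nonempty subset of $D$, the set $D$ is secure. Note that domination is not actually needed for this inequality; it is only carried in the statement so that the conclusion is that $D$ is a secure dominating set.

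The only delicate point is avoiding double counting. An outside neighbour of some $x\in X_u$ may lie in another clique $Q_{u'}$ with $u'\in U$. In that case it is already counted among the $k-s_{u'}$ unselected vertices of $Q_{u'}$ (or it is selected), and it does not belong to $W$. This is why I use only a lower bound for the selected vertices and an injective upper bound for the unselected ones, rather than an exact count.
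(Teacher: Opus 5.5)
Your proof is correct and follows the paper's argument essentially step for step. You use the same lower bound $|N[X]\cap D|\geq\sum_{u}s_u$ from completeness of the terminal cliques, the same injection of the unselected vertices outside $\bigcup_u Q_u$ into the vertices counted by $\sum_u b_u$ via the unique outside neighbour, and the same combination into $\sum_u c_u\geq 0$; your remark that domination is never used also holds for the paper's proof.
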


\begin{proof}
Let $\varnothing \neq X\subset D$ and $T:=\{u:X\cap Q_{u}\neq \varnothing
\}. $ Since each $Q_{u}$ is complete, we have $\bigcup_{u\in T}Q_{u}\subset
N[X]. $ In particular, 
\begin{equation}
\left\vert N[X]\cap D\right\vert \geq \sum_{u\in T}s_{u}.  \label{eq:cap1}
\end{equation}%
Let $y\in (N[X]\setminus D)\setminus \bigcup_{u\in T}Q_{u}.$ Then $y$ is
adjacent to some $x\in X\cap Q_{u},$ where $u\in T.$\ Since $y\notin
Q_{u}\cup D,$ the vertex $x$ is counted by $b_{u}.$ Consequently, we have $%
\left\vert (N[X]\setminus D)\setminus \bigcup_{u\in T}Q_{u}\right\vert \leq
\sum_{u\in T}b_{u}.$ It follows that 
\begin{equation}
\left\vert N[X]\setminus D\right\vert =\sum_{u\in T}(k-s_{u})+\left\vert
(N[X]\setminus D)\setminus \bigcup_{u\in T}Q_{u}\right\vert \leq \sum_{u\in
T}(k-s_{u}+b_{u}).  \label{eq:minus1}
\end{equation}%
Combining (\ref{eq:cap1}) and (\ref{eq:minus1}), we obtain 
\begin{equation*}
\left\vert N[X]\cap D\right\vert -\left\vert N[X]\setminus D\right\vert \geq
\sum_{u\in T}(2s_{u}-k-b_{u})=\sum_{u\in T}c_{u}\geq 0.
\end{equation*}%
Thus $D$ is secure.
\end{proof}

\begin{proposition}
\label{prop:suff-low} Let $D$ be a dominating set such that $s_{u}\geq r$
for every $u\notin \mathcal{E}$. Suppose that whenever $s_{v}=r$, every
vertex of $D\cap Q_{v}$ has a neighbor in $D\setminus Q_{v}$ belonging to
some terminal clique $Q_{u}$ with $s_{u}\geq r+1$. Suppose also that $%
c_{u}\geq t_{u}$ for every $u$ with $s_{u}\geq r+1$. Then $D$ is secure.
\end{proposition}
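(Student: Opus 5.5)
We verify (\ref{eq:security}) directly. Fix $\varnothing\neq X\subset D$ and set $T:=\{u:X\cap Q_u\neq\varnothing\}$. Split $T$ into $T^{+}:=\{u\in T:s_u\geq r+1\}$ and $T^{0}:=\{u\in T:s_u=r\}$. Since $\mathcal{E}$ consists of $D$-free cliques, $T\cap\mathcal{E}=\varnothing$, and by hypothesis every $u\in T$ lies in $T^{+}\cup T^{0}$. The difficulty compared with Proposition \ref{prop:suff-high} is that a clique with $s_v=r$ contributes $2s_v-k=-1$, so these deficits must be paid from the surplus $c_u$ of neighbouring high cliques. The hypothesis $c_u\geq t_u$ is exactly the budget for this.

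First, enlarge $X$. For each $v\in T^{0}$, every vertex of $D\cap Q_v$ has its unique outside neighbour in $D$, lying in some $Q_u$ with $s_u\geq r+1$. Pick any $x\in X\cap Q_v$ and let $y\in D\cap Q_u$ be its outside neighbour. Then $y\in N[X]$, and $Q_u$ need not be contained in $N[X]$. We compare with $X':=X\cup\{y\}$. We avoid proving monotonicity in general. Instead, we bound $|N[X]\cap D|-|N[X]\setminus D|$ from below by a clique-by-clique count.

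Define $U:=T^{+}\cup\{u:\,u\text{ is the high neighbour clique of some }v\in T^{0}\}$. Every clique $Q_u$ with $u\in U\setminus T$ still meets $N[X]$ in selected vertices (the $y$'s). The plan is to use a charging count instead. We lower bound
\begin{equation*}
|N[X]\cap D|-|N[X]\setminus D|\;\geq\;\sum_{u\in T^{+}}(2s_u-k-b_u)\;+\;\sum_{v\in T^{0}}\bigl((2r-k)+1\bigr)\;-\;(\text{overcount corrections}).
\end{equation*}
Within $Q_v$ for $v\in T^{0}$, the whole clique is in $N[X]$ and contributes $r-(r+1)=-1$. No unselected vertex outside $Q_v$ is adjacent to $D\cap Q_v$, so no $b$-type term arises there. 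The selected outside neighbours $y$ of $X\cap Q_v$ give $+1$ each, provided $y\notin\bigcup_{u\in T}Q_u$, which cancels the $-1$.

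The main obstacle is the case where these $y$'s land in cliques $Q_u$ with $u\in T^{+}$, since they are then already counted. Each such $v$ is a neighbour of $u$ with $s_v=r$ and both bridging endpoints selected, so $v$ is counted in $t_u$. Distinct $v$ give distinct bridging edges, hence distinct contributions to $t_u$. The deficit from these $v$ is therefore at most $t_u$, and it is absorbed by $c_u\geq t_u$. We must also ensure that the vertex $y\in Q_u$ is not double counted in $b_u$. This holds because its outside neighbour is selected, so $y$ is not counted in $b_u$.

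Summing, we get the bound
\begin{equation*}
|N[X]\cap D|-|N[X]\setminus D|\geq\sum_{u\in T^{+}}(c_u-t_u)+\sum_{v\in T^{0}\text{, charged outside }T^{+}}0\geq 0.
\end{equation*}
The accounting proceeds in three steps. Unselected vertices outside $\bigcup_{u\in T}Q_u$ in $N[X]$ are adjacent to vertices counted by $b_u$ with $u\in T^{+}$, exactly as in Proposition \ref{prop:suff-high}; for $v\in T^{0}$ no such vertex exists. Each $v\in T^{0}$ with a high neighbour outside $T$ gains a distinct selected vertex $y$, distinct because bridging edges are unique. Each $v\in T^{0}$ whose high neighbour lies in $T^{+}$ is charged to $t_u$. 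This yields (\ref{eq:security}), so $D$ is secure.
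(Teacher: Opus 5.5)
Your argument is correct and is essentially the paper's proof. You split $T$ into $T^{+}$ and $T^{0}$ and bound the unselected vertices of $N[X]$ outside $\bigcup_{u\in T}Q_u$ by $\sum_{u\in T^{+}}b_u$. You then pay each $-1$ coming from $v\in T^{0}$ either with a distinct selected outside neighbour $y\notin\bigcup_{u\in T}Q_u$, or by charging $v$ to $t_u$ for some $u\in T^{+}$. This gives $\sum_{u\in T^{+}}(c_u-t_u)\geq 0$.

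Three small points of presentation:
\begin{itemize}
\item The paragraph about enlarging $X$ and the set $U$ is never used and can be dropped.
\item The distinctness of the $y$'s comes from each vertex having at most one neighbour outside its terminal clique. Uniqueness of the bridging edge between two given cliques is not what does the work.
\item Writing out the identity $|N[X]\cap D|-|N[X]\setminus D|=\sum_{u\in T}(2s_u-k)+|(N[X]\cap D)\setminus\bigcup_{u\in T}Q_u|-|(N[X]\setminus D)\setminus\bigcup_{u\in T}Q_u|$ would replace your informal ``overcount corrections'' with the paper's clean combination of the two counts.
\end{itemize}
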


\begin{proof}
Let $\varnothing \neq X\subset D$ and $T:=\{u:X\cap Q_{u}\neq \varnothing
\}. $ Then $\bigcup_{u\in T}Q_{u}\subset N[X].$\ Moreover, every $u\in T$
satisfies either $s_{u}=r$ or $s_{u}\geq r+1$.

For each $v\in T$ with $s_{v}=r,$ choose $x\in X\cap Q_{v}.$ By hypothesis, $%
x$ has a selected neighbor $y\in Q_{u}$ outside $Q_{v},$ where $s_{u}\geq
r+1.$ If $u\notin T,$ then $y\in (N[X]\cap D)\setminus \bigcup_{w\in
T}Q_{w}. $ Such vertices $y$ are distinct for distinct $v.$ If $u\in T,$
then $v$ is counted by $t_{u}.$ Counting the two cases, we obtain 
\begin{equation}
\sum_{{v\in T,s_{v}=r}}1\leq \left\vert (N[X]\cap D)\setminus \bigcup_{u\in
T}Q_{u}\right\vert +\sum_{{u\in T,s_{u}\geq r+1}}t_{u}.  \label{eq:cap2}
\end{equation}

On the other hand, let $y\in (N[X]\setminus D)\setminus \bigcup_{u\in
T}Q_{u}.$ Then $y$ is adjacent to some $x\in X\cap Q_{u},$ where $u\in T.$
Since $y\notin Q_{u}\cup D,$ the hypothesis on terminal cliques with $%
s_{u}=r $ implies that $s_{u}\geq r+1.$ Thus $x$ is counted by $b_{u}.$ It
follows that 
\begin{equation}
\left\vert (N[X]\setminus D)\setminus \bigcup_{u\in T}Q_{u}\right\vert \leq
\sum_{u\in T,s_{u}\geq r+1}b_{u}.  \label{eq:minus2}
\end{equation}%
Combining (\ref{eq:cap2}) and (\ref{eq:minus2}), one has 
\begin{eqnarray*}
&&|N[X]\cap D|-|N[X]\setminus D| \\
&=&\sum_{u\in T}(2s_{u}-k)+\left\vert (N[X]\cap D)\setminus \bigcup_{u\in
T}Q_{u}\right\vert -\left\vert (N[X]\setminus D)\setminus \bigcup_{u\in
T}Q_{u}\right\vert \\
&\geq &\sum_{u\in T}(2s_{u}-k)+\sum_{\substack{ v\in T  \\ s_{v}=r}}1-\sum 
_{\substack{ u\in T  \\ s_{u}\geq r+1}}(b_{u}+t_{u}) \\
&=&\sum_{\substack{ u\in T  \\ s_{u}\geq r+1}}(2s_{u}-k-b_{u}-t_{u})+\sum 
_{\substack{ v\in T  \\ s_{v}=r}}(2r-k+1)=\sum_{\substack{ u\in T  \\ %
s_{u}\geq r+1}}(c_{u}-t_{u})\geq 0.
\end{eqnarray*}
\end{proof}

We recall the following result.

\begin{lemma}[\protect\cite{BBJJMM2017}]
\label{lem:half} If $G$ is a graph, then $\gamma _{s}(G)\geq \left\lceil
|V(G)|/2\right\rceil .$
\end{lemma}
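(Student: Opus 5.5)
The plan is to apply the security characterization (\ref{eq:security}) to the single test set $X=D$ and then use domination to identify $N[D]$.

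Let $D$ be any secure dominating set of $G$; such a set exists, since $V(G)$ itself is secure and dominating. Then $D\neq\varnothing$.

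\begin{enumerate}
\item Apply (\ref{eq:security}) with $X:=D$, which is a nonempty subset of $D$. This gives
\begin{equation*}
|N[D]\cap D|\geq |N[D]\setminus D|.
\end{equation*}

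\item Because $D$ is dominating, $N[D]=V(G)$. Substituting, the inequality becomes
\begin{equation*}
|D|=|V(G)\cap D|\geq |V(G)\setminus D|=|V(G)|-|D|.
\end{equation*}

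\item Rearranging gives $2|D|\geq |V(G)|$. Since $|D|$ is an integer, $|D|\geq \lceil |V(G)|/2\rceil$.
\end{enumerate}

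Taking $D$ of minimum cardinality yields $\gamma_{s}(G)\geq \lceil |V(G)|/2\rceil$.

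There is no real obstacle. The only point to note is that (\ref{eq:security}) must be applied to the whole set $D$ rather than to small subsets. This is exactly where domination enters: it makes the closed neighborhood of the test set equal to all of $V(G)$.
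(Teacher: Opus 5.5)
Your proof is correct: taking $X=D$ in (\ref{eq:security}) and using domination to get $N[D]=V(G)$ yields $|D|\geq |V(G)|-|D|$, which is the standard argument behind this bound. The paper gives no proof of its own and simply cites the result, so there is no different route to compare against.
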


\section{The cases $n=3$ and $k=3$}

\subsection{The case $n=3$}

\ 

It is well known that the edges of $K_{2r}$ can be partitioned into $2r-1$
perfect matchings (see, for example, \cite[Theorem 9.1]{Harary1969}). The
following lemma is an elementary consequence.

\begin{lemma}
\label{lem:regular}Let $r\geq 1$ be an integer.

\begin{enumerate}
\item $K_{2r}$ has an $r$-regular spanning subgraph.

\item Let $0$ be a specified vertex of $K_{2r+1}.$\ If $r$ is even, then $%
K_{2r+1}$ has an $r$-regular spanning subgraph. If $r$ is odd, then $%
K_{2r+1} $ has a spanning subgraph in which $0$ has degree $r+1$ and every
other vertex has degree $r$.
\end{enumerate}
\end{lemma}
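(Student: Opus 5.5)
Both parts follow from the cited decomposition of $E(K_{2r})$ into $2r-1$ perfect matchings $M_1,\dots,M_{2r-1}$ (\cite[Theorem 9.1]{Harary1969}). Each piece of the argument builds a spanning subgraph by taking a union of some of these matchings, possibly adding or deleting a few edges.

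For (1), I take the union $M_1\cup\cdots\cup M_r$. The matchings are pairwise edge-disjoint and perfect, so every vertex lies on exactly one edge of each $M_i$ with $i\le r$. This gives an $r$-regular spanning subgraph of $K_{2r}$; it exists since $r\le 2r-1$ for $r\ge 1$.

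For (2), I write $V(K_{2r+1})=\{0\}\cup W$ with $|W|=2r$. I decompose $K[W]\cong K_{2r}$ into perfect matchings $M_1,\dots,M_{2r-1}$ as above.

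\emph{Case $r$ even.} I start from $G_0:=M_1\cup\cdots\cup M_{r-1}$, which is $(r-1)$-regular on $W$. Vertex $0$ is still isolated, and I want to give it degree $r$ while raising exactly $r$ vertices of $W$ to degree $r$. The other $r$ vertices of $W$ must also gain one more degree, and they can be paired up by edges of the unused matching $M_r$. Concretely, $M_r$ has $r$ edges, and $r$ is even. I pick $r/2$ of these edges and add them. I join $0$ to both endpoints of each of the remaining $r/2$ edges of $M_r$; those remaining edges are not added. Then $0$ has degree $2\cdot(r/2)=r$. Every vertex of $W$ gains exactly one degree, either from an added $M_r$-edge or from its edge to $0$. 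The result is an $r$-regular spanning subgraph. (For $r=0$ the statement is trivial, but $r\ge1$ is assumed.)

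\emph{Case $r$ odd.} I do the same with $r$ replaced by a split into $(r+1)/2$ and $(r-1)/2$ edges of $M_r$. I join $0$ to both endpoints of $(r+1)/2$ edges of $M_r$ and add the other $(r-1)/2$ edges of $M_r$ themselves. Then $0$ has degree $r+1$, and every vertex of $W$ has degree $(r-1)+1=r$. This requires $r\le 2r-1$, i.e.\ $r\ge1$, which is fine. As a check for $r=1$: $K_3$ with $0$ of degree $2$ and the others of degree $1$ is the path through $0$, which is correct. A parity count also explains why odd $r$ forces the exceptional vertex: $(2r+1)r$ is odd, so no $r$-regular graph exists.

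The main point requiring care is just the case split on the parity of the number of edges of $M_r$ sent through vertex $0$. Everything else is bookkeeping.
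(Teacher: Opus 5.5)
Your proposal is correct and is essentially the paper's argument: for (1) you take $r$ of the $2r-1$ perfect matchings of $K_{2r}$, and for (2) you take $r$ matchings of $K_{2r+1}-\{0\}$ and reroute $\lceil r/2\rceil$ edges $xy$ of one matching through $0$ as $0x,0y$, so that $\deg(0)=2\lceil r/2\rceil$. Your split of $M_r$ into $r/2+r/2$ edges (for $r$ even) or $(r+1)/2+(r-1)/2$ edges (for $r$ odd) is exactly this replacement step written out by parity.
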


\begin{proof}
Partition the edges of $K_{2r}$ into $2r-1$ perfect matchings. The union of
any $r$ of them proves (1).

For (2), consider $K_{2r+1}-\{0\}\cong K_{2r}$, and take the union of $r$
pairwise edge-disjoint perfect matchings of this subgraph. Choose $%
\left\lceil r/2\right\rceil $ edges from one of these matchings, and replace
each chosen edge $xy$ by the two edges $0x$ and $0y$. Every vertex other
than $0$ retains degree $r$, while 
\begin{equation*}
\deg (0)=2\left\lceil \frac{r}{2}\right\rceil =\left\{ 
\begin{array}{ll}
r, & r\text{ even}, \\ 
r+1, & r\text{ odd}.%
\end{array}%
\right.
\end{equation*}
\end{proof}

\begin{theorem}
\label{thm:n3}For every odd $k\geq 3$, $\gamma _{s}(S_{k}^{3})=\frac{k^{3}+1%
}{2}.$
\end{theorem}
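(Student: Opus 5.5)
Since $k$ is odd, Lemma~\ref{lem:half} gives $\gamma_s(S_k^3)\ge \lceil k^3/2\rceil=\frac{k^3+1}{2}$. It therefore suffices to construct a secure dominating set $D$ with $|D|=\frac{k^3+1}{2}$, which is the same as $\Delta=1$. I will build $D$ so that Proposition~\ref{prop:suff-high} applies: every non-free terminal clique gets exactly $s_u=r+1$, and we check $c_u=1-b_u\ge 0$, i.e.\ $b_u\le 1$.

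\textbf{Free cliques.} The excess count of Lemma~\ref{lem:excess} guides the choice. If $e_u=1$ and $b_u=1$ then $x_u=0$; one clique with $e_u=0$, $b_u=0$ gives $x_u=1$. Then $\Delta=1$ forces $|\mathcal{E}|(k+1)=k^2-1$, i.e.\ $|\mathcal{E}|=k-1$. In $H=S_k^2$ take
$$\mathcal{E}=\{j1: 2\le j\le k\}.$$
These vertices are nonextreme and pairwise nonadjacent, since the only neighbor of $j1$ outside copy $j$ is $1j$. Their closed neighborhoods partition $V(H)\setminus\{11\}$: the vertex $j1$ covers copy $j$ together with $1j$. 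Hence every $u\notin\mathcal{E}$ other than $11$ has $e_u=1$, and $e_{11}=0$. The total is $|D|=(k^2-k+1)(r+1)=\frac{(k^2-k+1)(k+1)}{2}=\frac{k^3+1}{2}$.

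\textbf{Selection inside cliques.} For each $u\notin\mathcal{E}$ with $u\ne 11$, select the endpoint in $Q_u$ of the bridging edge to the free neighbor. This gives domination of the free cliques, since $j1$ is nonextreme and every vertex of $Q_{j1}$ has its outside neighbor in a non-free clique. This vertex is the single vertex counted by $b_u$. The remaining $r$ selected vertices of $Q_u$ (and the $r+1$ selected vertices of $Q_{11}$) are chosen among endpoints of bridging edges between two non-free cliques, plus extreme vertices of $S_k^3$. We impose the symmetry rule that a non-free/non-free bridging edge has either both endpoints selected or neither. Extreme vertices have no outside neighbor and never contribute to $b_u$. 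Then $b_u=1$ for $u\ne11$ and $b_{11}=0$, and Proposition~\ref{prop:suff-high} yields security. Domination inside non-free cliques is automatic because they are complete.

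\textbf{Realizing the degrees.} This is the main step. We need a spanning subgraph of $H-\mathcal{E}$ in which each $u\ne 11$ has degree $r$. The vertex $11$ must have degree $r+1$, or degree $r$ with the extreme vertex $111$ also selected. Here I use Lemma~\ref{lem:regular} copy by copy, using no cross edges.
\begin{itemize}
\item For each copy $i\ge2$, its clique minus $i1$ is $K_{2r}$; take an $r$-regular spanning subgraph by Lemma~\ref{lem:regular}(1), and leave the extreme vertices $iii$ unselected.
\item Copy $1$ is intact, a $K_{2r+1}$. Apply Lemma~\ref{lem:regular}(2) with special vertex $0=11$.
\item If $r$ is even, the subgraph is $r$-regular and we additionally select $111$.
\item If $r$ is odd, $11$ has degree $r+1$ and $111$ stays unselected.
\end{itemize}
Each chosen edge $\{u,v\}$ of $H$ is translated into selecting both endpoints of the corresponding bridging edge. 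This gives $s_u=r+1$ throughout and completes the construction.
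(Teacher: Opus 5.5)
Your proposal is correct and follows essentially the same construction as the paper. You use the same free set (with the special letter $1$ in place of $0$), the same selection of bridging-edge endpoints toward the free cliques, and copy-by-copy application of Lemma~\ref{lem:regular} with $11$ as the special vertex of the intact copy (selecting $111$ exactly when $r$ is even). You then close, as the paper does, with Proposition~\ref{prop:suff-high}, using $c_{11}=1$ and $c_u=0$ for the other non-free cliques.
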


\begin{proof}
Write $k=2r+1$. By Lemma \ref{lem:half}, one has $\gamma _{s}(S_{k}^{3})\geq 
\frac{k^{3}+1}{2}.$ It remains to construct a secure dominating set
attaining this bound.

Relabel the alphabet $[k]$ by $\mathbb{Z}_{k}$ and let $H:=S_{k}^{2}.$ Then $%
V(H)=\mathbb{Z}_{k}^{2}.$ Set 
\begin{equation*}
\mathcal{E}:=\{i0:i\neq 0\}.
\end{equation*}%
We shall construct $D\subset V(S_{k}^{3})$ such that $Q_{u}$ is $D$-free if
and only if $u\in \mathcal{E}.$ For each $i\neq 0$, one has $%
N_{H}(i0)=\{ij:j\neq 0\}\cup \{0i\}.$ Thus $\mathcal{E}$ is independent and
contains no extreme vertex of $H$. Moreover, we have%
\begin{equation*}
e_{u}=\left\vert N_{H}(u)\cap \mathcal{E}\right\vert =\left\{ 
\begin{array}{ll}
0, & u=00, \\ 
1, & u\in V(H)\setminus (\mathcal{E}\cup \{00\}).%
\end{array}%
\right.
\end{equation*}

Construct $D$ as follows. Since $i0j\sim ij0$\ for $i,j\neq 0,$\ $i00\sim
0ii $\ for $i\neq 0$, we first select 
\begin{equation}
ij0\in D\text{\ for }i,j\neq 0,\text{ }0ii\in D\text{ for }i\neq 0.
\label{eq:select-first-n3}
\end{equation}%
For each $i\neq 0$, the vertices $\{ij:j\neq 0\}$ induce a $K_{2r}$ in $H$.
By Lemma \ref{lem:regular}(1), there exists an $r$-regular spanning subgraph 
$F_{i}$ of this clique. For the clique $\{0j:j\in \mathbb{Z}_{k}\}$, Lemma %
\ref{lem:regular}(2) gives a spanning subgraph $F_{0}$ satisfying $\deg
_{F_{0}}(0j)=r$ for $j\neq 0,$ and 
\begin{equation*}
\deg _{F_{0}}(00)=%
\begin{cases}
r, & r\text{ even,} \\ 
r+1, & r\text{ odd.}%
\end{cases}%
\end{equation*}%
For every edge of $F_{i}$ $(i\in \mathbb{Z}_{k})$, select both endpoints of
the corresponding bridging edge in $S_{k}^{3}$. If $r$ is even, also select $%
000\in Q_{00}$.

These selections are distinct, since different bridging edges have distinct
endpoints, and $000$ has no neighbor outside $Q_{00}$. Consequently, 
\begin{equation*}
\begin{aligned} s_{ij}&=1+\deg_{F_i}(ij)=r+1\text{ } (i,j\ne0),\\
s_{0j}&=1+\deg_{F_0}(0j)=r+1\text{ } (j\ne0), \end{aligned}
\end{equation*}%
while 
\begin{equation*}
s_{00}=%
\begin{cases}
\deg _{F_{0}}(00)+1=r+1, & r\text{ even}, \\ 
\deg _{F_{0}}(00)=r+1, & r\text{ odd}.%
\end{cases}%
\end{equation*}%
Hence $s_{u}=r+1$\ for $u\notin \mathcal{E}.$

By (\ref{eq:select-first-n3}), every vertex of each $D$-free terminal clique 
$Q_{v}$ $(v\in \mathcal{E})$ is dominated. Since $s_{u}=r+1$ for $u\notin 
\mathcal{E}$, all remaining terminal cliques are also dominated. Thus $D$ is
a dominating set. Moreover, the selected vertices in (\ref%
{eq:select-first-n3}) are precisely those having unselected neighbors
outside their terminal cliques. Consequently, $b_{u}=e_{u},$ and hence 
\begin{equation*}
c_{u}=\left\{ 
\begin{array}{ll}
1, & u=00, \\ 
0, & u\in V(H)\setminus (\mathcal{E}\cup \{00\}).%
\end{array}%
\right.
\end{equation*}%
By Proposition \ref{prop:suff-high}, $D$ is secure. Since $|\mathcal{E}|=k-1$%
, one has%
\begin{equation*}
|D|=(k^{2}-|\mathcal{E}|)(r+1)=(k^{2}-k+1)\frac{k+1}{2}=\frac{k^{3}+1}{2}.
\end{equation*}%
Together with the lower bound, this proves the theorem.
\end{proof}

\begin{corollary}
\label{cor:n3-noextreme} If $k\equiv 3$ $(\func{mod}4)$, then $S_{k}^{3}$
admits a minimum secure dominating set containing no extreme vertices.
\end{corollary}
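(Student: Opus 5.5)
The plan is to inspect the construction in the proof of Theorem \ref{thm:n3} and check that, when $k\equiv 3 \pmod 4$, the set $D$ built there already avoids the extreme vertices. In that case $k=2r+1$ with $r$ odd. By Theorem \ref{thm:n3}, that $D$ is a minimum secure dominating set, since $|D|=\frac{k^{3}+1}{2}=\gamma_s(S_k^3)$. So the corollary follows once we show $D\cap\{iii: i\in\mathbb{Z}_k\}=\varnothing$.

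The extreme vertices of $S_k^3$ are $iii$, lying in the terminal clique $Q_{ii}$. First I will list which vertices were selected in the construction:
\begin{itemize}
\item the vertices in (\ref{eq:select-first-n3}), namely $ij0$ with $i,j\neq 0$ and $0ii$ with $i\neq 0$;
\item the endpoints of bridging edges corresponding to edges of the subgraphs $F_i$;
\item the vertex $000$, but only when $r$ is even.
\end{itemize}

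Since $r$ is odd, the third item does not occur, so $000\notin D$. For $i\neq 0$, the clique $Q_{ii}$ is not $D$-free, because $ii\notin\mathcal{E}$. I then check $iii$ against the other two items:
\begin{itemize}
\item It is not of the form $ij0$ with $j\neq 0$, since its last coordinate is $i\neq 0$.
\item It is not of the form $0jj$, since its first coordinate is $i\neq 0$.
\item It is not an endpoint of any bridging edge, because an extreme vertex has no neighbor outside its terminal clique.
\end{itemize}
The same bridging-edge argument applies to $000$. Hence no extreme vertex lies in $D$.

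The only point needing care is that the bridging endpoints selected from $F_0$ at $Q_{00}$ are nonextreme. This holds automatically because they have outside neighbors, and the count $s_{00}=\deg_{F_0}(00)=r+1$ was obtained in the theorem's proof without using $000$. So the argument is essentially bookkeeping, and I expect no real obstacle.
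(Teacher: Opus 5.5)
Your proposal is correct and follows the paper's proof: since $r$ is odd, $000$ is not added, and every other selected vertex, including those in (\ref{eq:select-first-n3}), is an endpoint of a bridging edge and hence nonextreme. Your coordinate checks for the vertices in (\ref{eq:select-first-n3}) are redundant, because the bridging-edge argument already covers them.
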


\begin{proof}
Here $r=(k-1)/2$ is odd. The construction in the proof of Theorem \ref%
{thm:n3} then uses only endpoints of bridging edges and no extreme vertex.
\end{proof}

\subsection{The case $k=3$}

\ 

A \emph{Hamiltonian cycle} of a graph $G$ is a cycle containing every vertex
of $G.$

\begin{lemma}[\protect\cite{Klavzar2008}]
\label{lem:ham}For every $m\geq 1$, $S_{3}^{m}$ has a Hamiltonian cycle.
\end{lemma}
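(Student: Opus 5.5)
The plan is to prove a stronger statement by induction on $m$, and then close the Hamiltonian cycle by hand.

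\textbf{Strengthened claim.} For every $m\geq 1$ and any two distinct extreme vertices $i^{m}$, $j^{m}$ of $S_{3}^{m}$, there is a Hamiltonian path of $S_{3}^{m}$ from $i^{m}$ to $j^{m}$.

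I will use the standard decomposition of $S_{3}^{m}$ by the \emph{first} coordinate. For each $a\in[3]$, the vertices $aw$ with $w\in[3]^{m-1}$ induce a copy $aS_{3}^{m-1}$ of $S_{3}^{m-1}$, via $aw\mapsto w$. By the adjacency rule (\ref{eq:sier}) with $h=1$, the only edges between distinct copies are the edges $ab^{m-1}\sim ba^{m-1}$ for $a\neq b$. Under the isomorphism, the vertex $ab^{m-1}$ of copy $a$ corresponds to the extreme vertex $b^{m-1}$ of $S_{3}^{m-1}$.

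\textbf{Base case.} For $m=1$ the graph is $K_{3}$, and the claim is trivial.

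\textbf{Inductive step for the path.} Fix distinct $i,j$ and let $l$ be the third symbol. Concatenate three pieces:
\begin{itemize}
\item a Hamiltonian path in copy $i$ from $i^{m}=i\,i^{m-1}$ to $il^{m-1}$;
\item the bridge $il^{m-1}\sim li^{m-1}$, followed by a Hamiltonian path in copy $l$ from $li^{m-1}$ to $lj^{m-1}$;
\item the bridge $lj^{m-1}\sim jl^{m-1}$, followed by a Hamiltonian path in copy $j$ from $jl^{m-1}$ to $j\,j^{m-1}=j^{m}$.
\end{itemize}
Each of the three inner paths exists by the induction hypothesis, since in each case the two endpoints correspond to distinct extreme vertices of $S_{3}^{m-1}$. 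The result visits every vertex exactly once.

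\textbf{Closing the cycle.} For $m=1$, $K_{3}$ is itself a cycle. For $m\geq 2$, take:
\begin{itemize}
\item a Hamiltonian path in copy $1$ from $12^{m-1}$ to $13^{m-1}$;
\item the bridge to $31^{m-1}$, then a Hamiltonian path in copy $3$ from $31^{m-1}$ to $32^{m-1}$;
\item the bridge to $23^{m-1}$, then a Hamiltonian path in copy $2$ from $23^{m-1}$ to $21^{m-1}$;
\item finally the bridge $21^{m-1}\sim 12^{m-1}$.
\end{itemize}
All three paths come from the strengthened claim applied to $S_{3}^{m-1}$, and together they give a Hamiltonian cycle of $S_{3}^{m}$.

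\textbf{Main point of care.} The one step that needs attention is verifying from (\ref{eq:sier}) that the only inter-copy edges are $ab^{m-1}\sim ba^{m-1}$, and that each copy is isomorphic to $S_{3}^{m-1}$ with the extreme vertices in the right positions. Everything else is bookkeeping.
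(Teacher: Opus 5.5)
Your proof is correct and complete. The paper gives no argument for this lemma; it cites Klav\v{z}ar (2008) instead.

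Your route is the standard self-contained one:
\begin{enumerate}
\item Decompose $S_3^m$ by the first coordinate into three copies $aS_3^{m-1}$. These copies are joined only by the single bridges $ab^{m-1}\sim ba^{m-1}$.
\item Prove the stronger claim that any two distinct extreme vertices are joined by a Hamiltonian path.
\item Close the cycle by traversing the three copies cyclically, each between its two bridge vertices.
\end{enumerate}

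All the verifications hold:
\begin{itemize}
\item Each copy induces $S_3^{m-1}$, since two vertices in the same copy first differ at an index $h\geq 2$, and (\ref{eq:sier}) then reduces to adjacency in $S_3^{m-1}$.
\item The case $h=1$ of (\ref{eq:sier}) forces the bridge form.
\item In every sub-path, the two prescribed endpoints correspond to distinct extreme vertices of $S_3^{m-1}$, so the induction hypothesis applies.
\end{itemize}

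Compared with the citation, your argument costs a few lines but makes the paper self-contained. The strengthened extreme-to-extreme statement is exactly what lets the induction go through. It also extends directly to every $k\geq 3$: visit the $k$ copies in any cyclic order, which only needs the two neighbors of each copy in that order to be distinct.
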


\begin{theorem}
\label{thm:k3}For every $n\geq 1$, $\gamma _{s}(S_{3}^{n})=\frac{3^{n}+1}{2}%
. $
\end{theorem}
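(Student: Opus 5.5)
We prove the statement in two halves: a lower bound from Lemma \ref{lem:half}, and a matching construction built on a Hamiltonian cycle, verified secure by Proposition \ref{prop:suff-low}. For $n=1$ the statement is Theorem \ref{thm:known-menon} ($\gamma_s(K_3)=2$), and the case $n=3$ is Theorem \ref{thm:n3}. So we may assume $n\ge 2$ and work with $H:=S_3^{n-1}$, $m:=3^{n-1}$ (odd), $r=1$. Lemma \ref{lem:half} gives $\gamma_s(S_3^n)\ge (3^n+1)/2$, so only a construction of size $(3m+1)/2$ is needed.

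\textbf{Target profile and accounting.} We take $\mathcal{E}=\varnothing$, so $e_u=0$ for all $u$, and use only $s_u\in\{1,2\}$. By Lemma \ref{lem:excess}, $\Delta=\sum_u x_u$ with $x_u=-1$ when $s_u=1$ and $x_u=2s_u-3=1$ when $s_u=2$. Since we want $\Delta=2|D|-3^n=1$, we need exactly $(m+1)/2$ cliques with $s_u=2$ and $(m-1)/2$ with $s_u=1$. Domination is then automatic, because every terminal clique contains a selected vertex and is complete. By Proposition \ref{prop:suff-low}, security reduces to two requirements:
\begin{enumerate}
\item the selected vertex of each clique with $s_v=1$ is a bridging endpoint whose partner is selected and lies in a clique with $s_u=2$;
\item every clique with $s_u=2$ has $b_u+t_u\le 1$, since $c_u=1-b_u$.
\end{enumerate}

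\textbf{Construction via Lemma \ref{lem:ham}.} Fix a Hamiltonian cycle $u_0u_1\cdots u_{m-1}u_0$ of $H$. In each $Q_{u}$, two vertices are endpoints of the cycle's bridging edges. The third vertex $z_u$ is either the extreme vertex of $S_3^n$ (when $u$ is extreme in $H$) or the endpoint of an off-cycle bridging edge, and these off-cycle edges pair up the non-extreme $z_u$'s.

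The plan is to pair each $s=1$ clique $u_{2j+1}$ ($0\le j\le (m-3)/2$) with its cycle neighbor $u_{2j}$, selecting both endpoints of the bridging edge $u_{2j}u_{2j+1}$. This gives $t_{u_{2j}}=1$ and $b=0$ on that vertex. The second selected vertex of each $s=2$ clique $u_{2j}$, and both selected vertices of $u_{m-1}$, must then contribute nothing to $b_u+t_u$. Concretely, each must be one of the following:
\begin{enumerate}
\item an extreme vertex;
\item the endpoint of a bridge whose other endpoint is also selected and lies in an $s=2$ clique;
\item the endpoint of the cycle edge $u_{m-1}u_0$, taking both of its endpoints.
\end{enumerate}
So we want to choose the pairing of $s=1$ cliques to cycle neighbors, and the second vertices in the $s=2$ cliques, so that the second vertices are matched up by bridging edges between $s=2$ cliques, using off-cycle edges $z_uz_w$ or cycle edges as needed. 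Any leftover second vertices are absorbed by the three extreme vertices. We then check the two requirements above and count $|D|=(m+1)+(m-1)/2=(3m+1)/2$.

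\textbf{Main obstacle.} The delicate step is guaranteeing this matching of second vertices exists, i.e.\ that the $s=2$ cliques can be chosen so that their spare vertices pair off along bridging edges among themselves. A naive strict alternation along the cycle fails here: it forces a second selected vertex whose outside neighbor is unselected, giving $b_u=t_u=1$ and $c_u-t_u<0$.

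If a single Hamiltonian cycle does not give enough freedom, the first fallback is to exploit the recursive structure. Either use the specific Hamiltonian cycle of \cite{Klavzar2008}, which traverses the three copies of $S_3^{n-2}$ in turn, so the off-cycle bridges have a controlled pattern. Or argue by induction on $n$: combine three copies of an optimal configuration of $S_3^{n-1}$, with at most one copy carrying the $+1$ excess, and fix the three connecting bridges by hand.
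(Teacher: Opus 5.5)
Your lower bound is correct, and so is your reduction of security to conditions (1)--(2) via Proposition~\ref{prop:suff-low}. But the upper bound, which is the real content of the theorem, is not proved. You flag the existence of the ``matching of second vertices'' as the main obstacle and leave it open.

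The obstacle comes from your strict alternation $2,1,2,1,\dots,1,2$ of the values $s_u$ along the cycle. Under this profile, the other cycle neighbor $u_{2j-1}$ of each $s=2$ clique $u_{2j}$ is an $s=1$ clique whose selected vertex points away from $u_{2j}$. So the second selected vertex of $Q_{u_{2j}}$ cannot sit on that cycle bridge, since it would give $b_u=t_u=1$. It is forced onto the off-cycle bridge at $u_{2j}$, or onto an extreme vertex. Your scheme therefore needs the off-cycle edges of the Hamiltonian cycle to join even positions to even positions, apart from the few exceptions that the extreme vertices and $u_{m-1}$ can absorb. Lemma~\ref{lem:ham} says nothing of this kind about an arbitrary Hamiltonian cycle, and neither fallback is carried out.

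The first fallback can in fact be made to work. Take the standard recursively built cycle, which runs through the three copies of $S_3^{n-2}$ in $H$ along recursively built Hamiltonian paths between extreme vertices. An induction shows that every off-cycle edge joins two positions of equal parity, when the cycle is numbered from the entry vertex of one copy. That is an extra lemma you would have to state and prove. The inductive fallback is vaguer still. The excesses $2|D\cap V_i|-3^{n-1}$ of the three copies $V_i$ are odd and sum to $1$. Hence some copy contains fewer than half of its vertices, and the connecting bridges would have to repair it by a mechanism you do not describe.

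The missing idea, which is what the paper does, is to drop alternation and use cycle edges only. Cut the cycle into consecutive blocks $v_1v_2v_3v_4$. For each of the three edges of a block, select both endpoints of the corresponding bridging edge. This gives the profile $(1,2,2,1)$, in which the two middle cliques spend their second selected vertices on the bridge between each other. Every selected vertex then has a selected outside neighbor, so $b_u=0$ and $c_u=1\ge t_u$ whenever $s_u=2$. Proposition~\ref{prop:suff-low} therefore applies to \emph{any} Hamiltonian cycle, because off-cycle edges are never used.

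The residue of $3^{n-1}$ modulo $4$ is absorbed by one special block:
\begin{itemize}
\item if $3^{n-1}\equiv 1\pmod 4$, a $5$-vertex path with profile $(1,2,2,2,1)$;
\item if $3^{n-1}\equiv 3\pmod 4$, the path $aeb$ through an extreme vertex $e$ of $H$, together with the extreme vertex of $S_3^n$ in $Q_e$. This gives profile $(1,3,1)$ with $b_e=0$ and $c_e=3\ge t_e=2$.
\end{itemize}
A block $(1,2,2)$ would serve equally well and stays within your profile $s_u\in\{1,2\}$: its last clique also selects its bridge endpoint toward the next block, so it has $b=1$ and $t=0$. Either way one obtains exactly $(3^n+1)/2$ vertices, matching Lemma~\ref{lem:half}.
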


\begin{proof}
For $n=1$, we have $\gamma _{s}(S_{3}^{1})=\gamma _{s}(K_{3})=2$ by Theorem %
\ref{thm:known-menon}. Let $n\geq 2$, and write $H:=S_{3}^{n-1}$ and $%
q:=3^{n-1}.$ By Lemma \ref{lem:ham}, fix a Hamiltonian cycle $\mathcal{C}$
of $H$. In each of the following constructions, for every retained edge of $%
H $, we select both endpoints of the corresponding bridging edge in $%
S_{3}^{n}$. These selections are distinct. In both cases, the resulting set $%
D$ satisfies $s_{u}\geq 1$ for $u\in V(H),$ so $\mathcal{E}=\varnothing $
and $D $ is dominating.

\smallskip \noindent \textbf{Case 1:} $\boldsymbol{q\equiv 1}$ $\boldsymbol{(%
\func{mod}4)}$\textbf{.} Retain the four edges of a path on five consecutive
vertices of $\mathcal{C}$. Partition the remaining $q-5$ vertices into $%
(q-5)/4$ consecutive $4$-vertex paths, and retain the three edges of each
path.

The values of $s_{u}$ along the $5$-vertex and $4$-vertex paths are,
respectively,%
\begin{equation*}
(1,2,2,2,1)\text{ and }(1,2,2,1).
\end{equation*}%
Every selected vertex has a selected outside neighbor. Hence for each $u$
with $s_{u}=2$, one has $b_{u}=0$ and $c_{u}=1.$ Each terminal clique with $%
s_{v}=1$ has its selected vertex adjacent to a selected vertex in a clique
with $s_{u}=2$, and each such $u$ satisfies $t_{u}\leq 1=c_{u}$. Since $r=1$%
, it follows from Proposition \ref{prop:suff-low} that $D$ is secure. Each
retained edge contributes two distinct selected vertices. Therefore, 
\begin{equation*}
\left\vert D\right\vert =2(4+3\frac{q-5}{4})=\frac{3q+1}{2}=\frac{3^{n}+1}{2}%
.
\end{equation*}

\smallskip \noindent \textbf{Case 2:} $\boldsymbol{q\equiv 3}$ $\boldsymbol{(%
\func{mod}4)}$\textbf{.} Choose an extreme vertex $e$ of $H$, and let $a,b$
be its two neighbors on $\mathcal{C}$. Since $\deg _{H}(e)=2$, $%
N_{H}(e)=\{a,b\}$. Retain the edges $ae$ and $eb$, and additionally select
the unique extreme vertex of $S_{3}^{n}$ contained in $Q_{e}$. Partition the
remaining $q-3$ vertices along $\mathcal{C}-\{a,e,b\}$ into $(q-3)/4$
consecutive $4$-vertex paths, and retain the three edges of each path.

By construction, one has $s_{a}=s_{b}=1$, $s_{e}=3,$ and the values of $%
s_{u} $ along every 4-vertex path are $(1,2,2,1)$. Then $b_{e}=0,$ $c_{e}=3$
and $t_{e}=2.$ For every $u$ with $s_{u}=2$, we again have $b_{u}=0$ and $%
c_{u}=1\geq t_{u}.$ Moreover, every terminal clique with $s_{v}=1$ has its
selected vertex adjacent to a selected vertex in a clique with $s_{u}\geq 2$%
. Proposition \ref{prop:suff-low} therefore implies that $D$ is secure. The
two retained edges incident with $e$, together with the additionally
selected extreme vertex, contribute five selected vertices. Hence%
\begin{equation*}
\left\vert D\right\vert =5+6\frac{q-3}{4}=\frac{3q+1}{2}=\frac{3^{n}+1}{2}.
\end{equation*}

In both cases, the constructed secure dominating set attains the lower bound
from Lemma \ref{lem:half}. Therefore, we have $\gamma _{s}(S_{3}^{n})=\frac{%
3^{n}+1}{2}.$
\end{proof}

\section{The cases of odd $k\geq 5$}

We now consider the remaining cases $k\geq 5$, distinguishing the two
residue classes of $k$ modulo $4$.

\subsection{The case $k\equiv 1$ $(\func{mod}4)$}

\ 

Let $\func{dist}_{G}(x,y)$ denote the distance between vertices $x$ and $y$
in a connected graph $G.$ A set $C\subset V(G)$ is a \emph{1-code} if $\func{%
dist}_{G}(x,y)\geq 3$ for all distinct $x,y\in C$. A $1$-code $C$ is a \emph{%
1-perfect code} if $V(G)=\bigsqcup_{c\in C}N_{G}[c].$

We use the following lemma on codes in Sierpi\'{n}ski graphs.

\begin{lemma}[\protect\cite{GKM2005}]
\label{lem:codes}Let $m\geq 1$. For each $i\in \lbrack k]$, the following
statements hold.

\begin{enumerate}
\item If $m$ is odd, then $S_{k}^{m}$ admits a $1$-perfect code $P_{i}^{m}$
whose only extreme vertex is $i^{m}$.

\item If $m$ is even, then $S_{k}^{m}$ admits a $1$-code $B_{i}^{m}$
containing no extreme vertex such that $V(S_{k}^{m})\setminus \{{i^{m}\}}%
=\bigsqcup_{v\in B_{i}^{m}}N_{S_{k}^{m}}[v].$
\end{enumerate}
\end{lemma}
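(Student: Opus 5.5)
The lemma is quoted from \cite{GKM2005}, so in the paper we may simply invoke it. If we want a self-contained argument, the plan is to prove (1) and (2) together by induction on $m$. The tool is the standard decomposition of $S_k^{m+1}$ into $k$ copies $jS_k^{m}$ ($j\in[k]$). For $j\neq l$ there is exactly one edge between copies $j$ and $l$, namely $jl^{m}\sim lj^{m}$, and the extreme vertices of $S_k^{m+1}$ are the vertices $j^{m+1}$. Because a $1$-code has pairwise distances at least $3$, closed neighbourhoods of code vertices in different copies can only interact through these bridging edges. So everything reduces to controlling which extreme vertices of each copy lie in the code, or are dominated by it.

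The base cases are direct. For $m=1$ take $P_i^1=\{i\}$ in $K_k$. For $m=2$ take $B_i^2=\{ji:j\neq i\}$. Each $ji$ is nonextreme and dominates the clique $Q_j$ together with its outside neighbour $ij$. Their distances are $3$, via $ji\sim ij\sim il\sim li$. Their closed neighbourhoods are pairwise disjoint and cover everything except $ii$.

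For the inductive step we paste translated codes $jC_j$ into the copies. The natural choice for $m$ odd $\to m+1$ is $C_j=P_i^{m}$ for $j\neq i$. Then the code vertex $ji^{m}$ also dominates the vertex $ij^{m}$ across the bridge. The remaining copy $i$ must then be covered, apart from $i^{m+1}$ and the already dominated vertices $ij^{m}$, by a $1$-code of $S_k^m$ with no extreme vertices that dominates exactly the nonextreme vertices. This shows the induction must be strengthened. I would carry along an auxiliary family of such ``extreme-avoiding'' codes $C^m$, with $C^1=\varnothing$, and for each $m$ and each prescribed pattern of extreme vertices (which are in the code, which are dominated from outside, which are left uncovered) give the recipe assembling the code from the patterns in the $k$ copies. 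At each bridge $jl^{m}\sim lj^{m}$ one checks the compatibility condition: exactly one of the two endpoints is covered from the other side, and no two code vertices lie at distance at most $2$ across it.

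The main obstacle is choosing this strengthened induction hypothesis so that it closes. The family of extreme-vertex patterns must be stable under the pasting rule, and the parity of $m$ must force the alternation between ``perfect code containing one extreme vertex'' (odd $m$) and ``code missing exactly one extreme vertex'' (even $m$). I would first try to settle the patterns by the count $|V(S_k^m)|=k^m$ modulo $k+1$: perfection forces $k^m\equiv 1$ or $k^m\equiv 0 \pmod{k+1}$ according to the parity of $m$. The correct list of auxiliary code types can then be read off from these residues, and the pasting verified type by type, as in \cite{GKM2005}.
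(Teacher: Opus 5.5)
Your opening sentence matches the paper exactly: the lemma is imported from \cite{GKM2005} with no proof given. Within the paper, then, your proposal agrees with it.

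The self-contained sketch, however, is not yet a proof. It stops where the real work lies, and deferring the list of code types to \cite{GKM2005} makes it circular as a stand-alone argument. From the step $m$ odd $\to m+1$ you correctly extract the need for a code $C^m$ ($m$ odd) with no extreme vertex whose closed neighbourhoods partition exactly the nonextreme vertices. But you never fix the full induction hypothesis, and you never treat the step $m$ even $\to m+1$.

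At even levels no analogue of $C^m$ exists, since $k^m-k\equiv 2\pmod{k+1}$ for even $m$. The missing auxiliary object is the perfect code $A^m$ of $S_k^m$ containing all $k$ extreme vertices, for instance $A^2=\{jj:j\in[k]\}$.

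There are also two slips. First, for odd $m$ the residue is $k^m\equiv -1\pmod{k+1}$, not $0$; the value $0$ never occurs. Second, your bridge condition is misstated. Typically neither endpoint of a bridge is a code vertex and each is covered inside its own copy. The actual requirement is only that the two endpoints are not both code vertices, and that if one is, the other is left uncovered inside its copy.

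With $jX:=\{jw:w\in X\}$ and base $P_i^1=\{i\}$, $C^1=\varnothing$, the induction closes on these four families:
\begin{align*}
m\text{ odd:}\quad & B_i^{m+1}=iC^m\cup\bigcup_{j\neq i}jP_i^m,\qquad A^{m+1}=\bigcup_{j\in[k]}jP_j^m,\\
m\text{ even:}\quad & P_i^{m+1}=iA^m\cup\bigcup_{l\neq i}lB_i^m,\qquad C^{m+1}=\bigcup_{j\in[k]}jB_j^m.
\end{align*}

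Two facts reduce every check to bookkeeping. First, $\func{dist}(x,y)\leq 2$ iff $N[x]\cap N[y]\neq\varnothing$. Second, the closed neighbourhood in $S_k^{m+1}$ of a code vertex $jw$ is its closed neighbourhood in copy $j$ plus at most its bridge partner, which occurs only when $w=l^m$ with $l\neq j$. The checks then go as follows:
\begin{itemize}
\item In $B_i^{m+1}$, the partners $ij^m$ of the code vertices $ji^m$ are exactly the vertices of copy $i$, other than $i^{m+1}$, that $iC^m$ leaves uncovered.
\item In $P_i^{m+1}$, the vertices $il^m\in iA^m$ fill precisely the holes $li^m$ of the copies $lB_i^m$.
\item In $A^{m+1}$ and $C^{m+1}$, no code vertex has a bridge partner, and every bridge endpoint is covered inside its own copy.
\end{itemize}
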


\begin{corollary}
\label{cor:code-size} Let $m\geq 1,$ $q=k^{m},$ and $i\in \lbrack k]$.

\begin{enumerate}
\item If $m$ is even, then $|B_{i}^{m}|=\frac{q-1}{k+1}.$

\item If $m$ is odd, then $|P_{i}^{m}|=\frac{q+1}{k+1}.$
\end{enumerate}
\end{corollary}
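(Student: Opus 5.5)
The plan is a double count of $|V(S_k^m)|=q=k^m$ using the disjoint-union decompositions from Lemma \ref{lem:codes}. The only extra input needed is the size of each closed neighborhood $N_{S_k^m}[v]$: it is $k$ if $v$ is extreme (degree $k-1$) and $k+1$ otherwise (degree $k$).

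For (1), take $m$ even. Lemma \ref{lem:codes}(2) gives $V(S_k^m)\setminus\{i^m\}=\bigsqcup_{v\in B_i^m}N[v]$, and no vertex of $B_i^m$ is extreme. Each closed neighborhood in the union therefore has exactly $k+1$ vertices. Counting both sides yields $q-1=(k+1)|B_i^m|$, so $|B_i^m|=\frac{q-1}{k+1}$.

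For (2), take $m$ odd. Lemma \ref{lem:codes}(1) gives $V(S_k^m)=\bigsqcup_{v\in P_i^m}N[v]$, and $P_i^m$ contains exactly one extreme vertex, namely $i^m$. Its closed neighborhood has $k$ vertices, and each of the other $|P_i^m|-1$ neighborhoods has $k+1$ vertices. Hence $q=k+(k+1)(|P_i^m|-1)=(k+1)|P_i^m|-1$, which gives $|P_i^m|=\frac{q+1}{k+1}$.

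There is no real obstacle here. The only points that need care are the degree facts, which are already recorded at the start of Section 2, and the disjointness of the unions, which Lemma \ref{lem:codes} provides directly. As a consistency check, $k\equiv -1 \pmod{k+1}$ implies $k^m\equiv(-1)^m \pmod{k+1}$, so both quotients are integers, as they must be.
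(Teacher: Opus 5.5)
Your proposal is correct and follows the paper's proof essentially step for step. Both count the disjoint closed neighborhoods from Lemma~\ref{lem:codes}, using $|N[v]|=k+1$ for nonextreme $v$ and $|N[v]|=k$ for the single extreme code vertex $i^m$ when $m$ is odd. Your integrality check via $k^m\equiv(-1)^m \pmod{k+1}$ is a harmless extra that the paper does not include.
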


\begin{proof}
If $m$ is even, then every vertex of $B_{i}^{m}$ is nonextreme and hence has
a closed neighborhood of size $k+1$. Since these closed neighborhoods
partition $V(S_{k}^{m})\setminus {i^{m}}$, we obtain $(k+1)\left\vert
B_{i}^{m}\right\vert =q-1,$ which proves (1).

If $m$ is odd, then $P_{i}^{m}$ contains exactly one extreme vertex, whose
closed neighborhood has size $k$, while every other code vertex has a closed
neighborhood of size $k+1$. Since these closed neighborhoods partition $%
V(S_{k}^{m})$, it follows that $q=k+\left( \left\vert P_{i}^{m}\right\vert
-1\right) (k+1)=(k+1)\left\vert P_{i}^{m}\right\vert -1,$ which proves (2).
\end{proof}

A \emph{multigraph} may contain parallel edges and is \emph{loopless} if it
has no loops. A $2$-\emph{factor} is a $2$-regular spanning subgraph.

\begin{lemma}[\protect\cite{Petersen1891}]
\label{lem:petersen}Every loopless $2d$-regular multigraph admits a
decomposition into $d$ pairwise edge-disjoint $2$-factors.
\end{lemma}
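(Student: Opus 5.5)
We prove this statement, Petersen's theorem, by the classical route through Euler circuits and matchings in regular bipartite multigraphs. Let $G$ be a loopless $2d$-regular multigraph. Every vertex has even degree, so each connected component of $G$ with at least one edge has a closed Euler trail. Traversing these trails orients every edge of $G$. Each passage through a vertex uses one incoming and one outgoing edge, so in the resulting orientation every vertex has in-degree $d$ and out-degree $d$. Looplessness guarantees that every arc $u\to v$ has $u\neq v$.

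Next we form an auxiliary bipartite multigraph $B$. Its two sides are copies $V^{+}=\{v^{+}:v\in V(G)\}$ and $V^{-}=\{v^{-}:v\in V(G)\}$ of $V(G)$. For each arc $u\to v$ of the orientation we add one edge $u^{+}v^{-}$, keeping multiplicities. Then $B$ is a $d$-regular bipartite multigraph.

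The key step is to show that $B$ decomposes into $d$ perfect matchings. We argue by induction on $d$ using Hall's theorem. For any $S\subset V^{+}$, the $d|S|$ edges leaving $S$ all end in $N_{B}(S)$. That set absorbs at most $d|N_{B}(S)|$ edge-ends, so $|N_{B}(S)|\geq|S|$. Hence $B$ has a perfect matching $M$. Removing $M$ leaves a $(d-1)$-regular bipartite multigraph, and induction finishes the decomposition.

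Finally, each perfect matching $M$ of $B$ gives a spanning subgraph $F_{M}$ of $G$ consisting of the edges corresponding to the arcs in $M$. In $F_{M}$ each vertex $v$ has exactly one out-arc, through $v^{+}$, and exactly one in-arc, through $v^{-}$. These are two distinct edge-ends at $v$, and because there are no loops $v$ has degree exactly $2$ in $F_{M}$. A pair of parallel edges forming a $2$-cycle is allowed in a multigraph. Thus each $F_{M}$ is a $2$-factor. The $d$ matchings partition the edges of $B$, which correspond bijectively to the edges of $G$, so the $2$-factors are pairwise edge-disjoint and together cover $E(G)$.

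The only step needing real care is the regular bipartite matching decomposition. The Hall counting above handles it, and parallel edges cause no difficulty there. The role of looplessness is to rule out an arc $u\to u$: such an arc would put $u^{+}u^{-}$ into a matching and give $u$ degree $2$ from a single edge, which is not a $2$-factor in the intended sense.
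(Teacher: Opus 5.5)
Your proof is correct. The Euler-circuit orientation gives every vertex in- and out-degree $d$, so the split bipartite multigraph is $d$-regular. Your edge count verifies Hall's condition, so the bipartite multigraph splits into $d$ perfect matchings. Looplessness is exactly what makes the out-arc and the in-arc at each vertex two distinct edges, so each matching yields a $2$-factor.

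The paper gives no proof of this lemma; it only cites Petersen. Your argument is the standard modern proof of that result. It works unchanged for multigraphs with parallel edges, with digons allowed in the $2$-factors. That is the generality the paper needs when it applies the lemma to the multigraph $J^{+}$, with possibly parallel auxiliary edges, in Case 2 of the proof of Theorem~\ref{thm:k1}.
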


\begin{theorem}
\label{thm:k1} If $k\equiv 1$ $(\func{mod}4)$, $k\geq 5$ and $n\geq 3$, then 
$\gamma _{s}(S_{k}^{n})=\frac{k^{n}+1}{2}.$
\end{theorem}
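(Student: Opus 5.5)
The lower bound $\gamma_s(S_k^n)\ge \frac{k^n+1}{2}$ is Lemma \ref{lem:half}, so the plan is to construct a secure dominating set of that size, following the pattern of Theorem \ref{thm:n3}. Write $k=2r+1$ with $r$ even, $H=S_k^{n-1}$, $m=n-1\ge 2$ and $q=k^m$. I would choose the index set $\mathcal{E}$ of $D$-free cliques from the codes of Lemma \ref{lem:codes}. For every $u$ adjacent to $\mathcal{E}$, select the $Q_u$-endpoint of the bridging edge toward $\mathcal{E}$, which Lemma \ref{lem:empty}(1) forces. All other selections will be either both endpoints of bridging edges inside $H-\mathcal{E}$ or extreme vertices of $S_k^n$, so that $b_u=e_u$. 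By Lemma \ref{lem:excess}, $\Delta=\sum_{u\notin\mathcal{E}}x_u$, and we need $\Delta=1$.

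\emph{Case $m$ even.} Take $\mathcal{E}=B_i^m$ for some fixed $i$. This set is independent, contains no extreme vertex, and satisfies $e_u=1$ for all $u\notin\mathcal{E}\cup\{i^m\}$, while $e_{i^m}=0$. Aim for $s_u=r+1$ for every $u\notin\mathcal{E}$. Then $x_u=1-e_u$, and Corollary \ref{cor:code-size} gives
\[
\Delta=(q-|\mathcal{E}|)-k|\mathcal{E}|=q-(k+1)\tfrac{q-1}{k+1}=1 .
\]
To realize $s_u=r+1$, I need a spanning subgraph $F$ of $H-\mathcal{E}$ in which each $u$ has degree $r+1-e_u$ minus the number of extreme vertices of $S_k^n$ selected in $Q_u$.
\begin{itemize}
\item A nonextreme $u$ with $e_u=1$ has degree $k-1=2r$ in $H-\mathcal{E}$.
\item An extreme $j^m\neq i^m$ has degree $k-2=2r-1$ in $H-\mathcal{E}$.
\item The vertex $i^m$ has degree $k-1=2r$.
\end{itemize}
Add a dummy vertex (or a few parallel edges) joining the $k$ extreme vertices of $H$, to obtain a loopless $2r$-regular multigraph $M$; this is possible after a small parity adjustment. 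By Lemma \ref{lem:petersen}, $M$ decomposes into $r$ edge-disjoint $2$-factors. Since $r$ is even (this is exactly where $k\equiv1\pmod 4$ is used), the union of $r/2$ of them is $r$-regular. Delete the dummy edges. Every extreme $u=j^m$ that loses an edge is compensated by selecting the extreme vertex $j^n\in Q_u$, which has no outside neighbour and so does not affect $b_u$. For $i^m$, which needs degree $r+1$, select $i^n$ as well. One then checks that $s_u=r+1$ and $c_u=2s_u-k-b_u=1-e_u\ge0$ for all $u$. Proposition \ref{prop:suff-high} then gives security, and domination follows as in Theorem \ref{thm:n3}.

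\emph{Case $m$ odd.} Here I would take $\mathcal{E}=P_i^m\setminus\{i^m\}$, which has size $\frac{q-k}{k+1}$. Every neighbour of $i^m$ has $e_u=0$ (it is covered only by $i^m$), as does $i^m$ itself, so the $k$-clique $K=\{i^{m-1}a\}$ of $H$ has $e_u=0$ throughout. All other non-free $u$ have $e_u=1$, except possibly extreme vertices of $H$, which are treated as above. If every $s_u$ equals $r+1$, then $\Delta=k$, which is too large. To bring $\Delta$ down to $1$, I would set $s_u=r$ on exactly $r$ vertices of $K$, each contributing $x_u=-1$. The remaining $r+1$ vertices of $K$ then carry $c_u\ge t_u$ and absorb those $r$ cliques. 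Concretely, I would match each low clique to a high clique of $K$ by bridging edges with both endpoints selected, arranged with a Lemma \ref{lem:regular}-type subgraph on $K$, and use the extreme vertex $i^n$ to fix parity. Security would then come from Proposition \ref{prop:suff-low} instead of Proposition \ref{prop:suff-high}. A count as in the previous case gives $|D|=\frac{k^n+1}{2}$.

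The main obstacle is the degree bookkeeping. The deficient degrees at the extreme vertices of $H$ and at $i^m$ must be repaired so that a regular multigraph exists for Petersen's theorem, with every surplus selection landing on extreme vertices of $S_k^n$ so that $b_u=e_u$ is preserved. In the odd case there is the additional task of placing the $r$ low cliques inside $K$ so that every $s_v=r$ vertex has a selected neighbour in a high clique with $c_u\ge t_u$. I would first try the explicit design on $K\cong K_k$, splitting it into $r$ low and $r+1$ high vertices, with each high vertex absorbing at most $c_u$ low neighbours.
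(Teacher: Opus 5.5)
Your even case is essentially the paper's Case~1, with one correction: the dummy vertex should be joined only to the $2r$ extreme vertices other than $i^m$, since $i^m$ already has degree $2r$ in $H-\mathcal{E}$. The odd case, however, has a genuine gap.

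With $\mathcal{E}=P_i^m\setminus\{i^m\}$, the whole clique $K=N_H[i^m]$ has $e_u=0$. Cliques with $s_v=r$ cannot cancel the resulting surplus, because such a clique is never a net saving. By Lemma~\ref{lem:minocc}, all $r$ selected vertices of such a $Q_v$ need selected outside neighbours. By Lemma~\ref{lem:minmin}, these neighbours lie in $r$ distinct cliques $Q_w$ with $s_w\ge r+1$. Each such $w$ counts $v$ in $t_w$ and pays $x_w\ge c_w\ge t_w$ (Lemmas~\ref{lem:capacity} and \ref{lem:excess}). So matching a low clique to a single high clique is not enough. In your concrete design, a high clique of $K$ with $s_w=r+1$ has $c_w\le 1$. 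Hence the $r+1$ high cliques of $K$ absorb at most $r+1<r^2$ attachments, while the outside neighbours (with $e_w=1$, $s_w=r+1$) have $c_w\le 0$ and absorb none.

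More generally, suppose $\ell$ cliques have $s=r$; all lie inside $K$, since they need $e=0$. Then $\Delta\ge -\ell+\sum_w t_w\ge (r-1)\ell$, exactly as in Lemma~\ref{lem:U-excess}. The other $k-\ell$ cliques of $K$ have odd $x_w\ge 1$ by Lemma~\ref{lem:excess}(3), so also $\Delta\ge k-2\ell$. Hence $\Delta\ge\max\{k-2\ell,(r-1)\ell\}\ge 2$ for every $\ell$ when $r\ge 2$. Thus no secure dominating set with your $\mathcal{E}$ has $(k^n+1)/2$ vertices.

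The missing idea is to modify $\mathcal{E}$ instead of introducing cliques at level $r$. The paper swaps $a=i^m$ for its neighbour $x=i^{m-1}j$, taking $\mathcal{E}=(P_i^m\setminus\{a\})\cup\{x\}$. Now every vertex of $J=H-\mathcal{E}$ has $e_u=1$, except the outside neighbour $y=i^{m-2}ji$ of $x$, which has $e_y=2$.

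The $k+1$ vertices of degree $2r-1$ in $J$ are all extreme vertices of $H$ together with $y$. These are paired by auxiliary edges, and Lemma~\ref{lem:petersen} gives an $r$-regular $F^+$. A lost auxiliary edge is repaired by selecting an extreme vertex of $S_k^n$ at an extreme vertex of $H$. At $y$ it is repaired by a one-sided selection on an unused edge $yz\in E(J)$.

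This gives $s_y=r+2$ and $s_u=r+1$ otherwise. Then $x_y=1$ and all other $x_u=0$, so $\Delta=1$, and Proposition~\ref{prop:suff-high} applies directly with no cliques at level $r$.
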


\begin{proof}
Write $k=2r+1$, where $r$ is even. Let $m:=n-1,$ $H:=S_{k}^{m},$ $q:=k^{m}.$
Lemma \ref{lem:half} gives the lower bound. We construct a secure dominating
set $D\subset V(S_{k}^{n})$ of the same cardinality.

In both cases below, we prescribe a set $\mathcal{E}\subset V(H)$ indexing
the $D$-free terminal cliques and choose a spanning subgraph $F$ of $J:=H-E$%
. The construction of $D$ uses the following rules:

\begin{enumerate}
\item[(i)] For every edge $uv\in E(H)$ with $u\in V(J)$ and $v\in \mathcal{E}
$, select the endpoint in $Q_{u}$ of the corresponding bridging edge.

\item[(ii)] For every edge $uv\in E(F)$, select both endpoints of the
corresponding bridging edge.
\end{enumerate}
Additional selections will be specified in each case.

\smallskip \noindent \textbf{Case 1: }$\boldsymbol{m}$\textbf{\ is even.}
Fix $i\in \lbrack k]$ and let 
\begin{equation*}
\mathcal{E}:=B_{i}^{m},\text{ }a:=i^{m},\text{ }J:=H-\mathcal{E}.
\end{equation*}%
By Lemma \ref{lem:codes}, the set $\mathcal{E}$ is independent and contains
no extreme vertex of $H.$ Moreover, one has $e_{a}=0,$ and $e_{u}=1$ for $%
u\in V(J)\setminus \{a\}.$ It follows that%
\begin{equation*}
\deg _{J}(u)=\left\{ 
\begin{array}{ll}
2r-1, & u\neq a\text{ is extreme in }H, \\ 
2r, & \text{otherwise.}%
\end{array}%
\right.
\end{equation*}%
Add a new vertex $\infty $ and one auxiliary edge joining $\infty $ to each
of the $k-1$ extreme vertices other than $a.$ The resulting graph $J^{+}$ is 
$2r$-regular. By Lemma \ref{lem:petersen}, its edge set decomposes into $r$
pairwise edge-disjoint 2-factors. Since $r$ is even, the union of any $r/2$
of them is an $r$-regular spanning subgraph $F^{+}$ of $J^{+}$. Let $%
F:=F^{+}-\{\infty \}.$

Construct $D$ by rules (i) and (ii). In addition, select the extreme vertex
of $S_{k}^{n}$ in $Q_{a}.$ For every other extreme vertex $u$ of $H$ with $%
u\infty \in E(F^{+})$, also select the extreme vertex of $S_{k}^{n}$ in $%
Q_{u}.$

For each $u\in V(J)\setminus \{a\}$, the vertices selected through $F$,
together with the additional extreme vertex when $u\infty \in E(F^{+})$,
contribute exactly $r$ vertices to $Q_{u}$. Together with the endpoint
selected by (i), this gives $s_{u}=r+1$ for $u\in V(J)\setminus \{a\}.$
Similarly, $s_{a}=\deg _{F}(a)+1=r+1.$ Thus every terminal clique indexed by 
$J$ contains exactly $r+1$ selected vertices, while those indexed by $%
\mathcal{E}$\ are $D$-free.

Since $\mathcal{E}$ contains no extreme vertex of $H$, every vertex of a $D$%
-free terminal clique is dominated through its outside neighbor selected by
(i). The remaining terminal cliques are dominated since $s_{u}=r+1$. Hence $%
D $ is dominating.

By construction, one has $b_{u}=e_{u},$ and 
\begin{equation*}
c_{u}=\left\{ 
\begin{array}{ll}
1, & u=a, \\ 
0, & u\in V(J)\setminus \{a\}.%
\end{array}%
\right.
\end{equation*}%
Thus Proposition \ref{prop:suff-high} implies that $D$ is secure. By
Corollary \ref{cor:code-size}, we have 
\begin{equation*}
\left\vert D\right\vert =(q-\left\vert \mathcal{E}\right\vert )(r+1)=(q-%
\frac{q-1}{k+1})\frac{k+1}{2}=\frac{kq+1}{2}=\frac{k^{n}+1}{2}.
\end{equation*}

\smallskip \noindent \textbf{Case 2: }$\boldsymbol{m}$\textbf{\ is odd.}
Here $m\geq 3$. Choose distinct $i,j\in \lbrack k]$ and let $P_{i}^{m}$ be
the $1$-perfect code from Lemma \ref{lem:codes}. Let $a:=i^{m},$\ $%
x:=i^{m-1}j$ and $y:=i^{m-2}ji.$ By (\ref{eq:sier}), $N_{H}[a]=\{i^{m-1}t:t%
\in \lbrack k]\}$ and $N_{H}[x]=N_{H}[a]\cup \{y\}.$ Then $a$ and $u$ belong
to same terminal clique of $H,$ and $y$ is the unique neighbor of $x$
outside this clique. Define 
\begin{equation*}
\mathcal{E}:=(P_{i}^{m}\setminus \{a\})\cup \{x\},\text{ }J:=H-\mathcal{E}.
\end{equation*}%
Since $P_{i}^{m}$ is a 1-code containing $a$, neither $x$ or $y$ belongs to $%
P_{i}^{m},$ and $x$ has no neighbor in $P_{i}^{m}\setminus \{a\}$.\ Thus $%
\mathcal{E}$ is independent and contains no extreme vertex of $H$. Since $%
y\notin P_{i}^{m}\cup N_{H}[a],$ the vertex $y$ has exactly one neighbor in $%
P_{i}^{m}\setminus \{a\}$.\ Replacing $a$ with $x$ preserves the coverage of 
$N_{H}[a]$ and additionally covers $y.$ Hence we have $e_{y}=2,$ and $%
e_{u}=1 $ for $u\in V(J)\setminus \{y\}.$ It follows that for every $u\in
V(J),$ 
\begin{equation*}
\deg _{J}(u)=\deg _{H}(u)-e_{u}=\left\{ 
\begin{array}{ll}
2r-1, & u=y\text{ or }u\text{ is extreme in }H, \\ 
2r, & \text{otherwise.}%
\end{array}%
\right.
\end{equation*}

There are $k+1=2r+2$ vertices of degree $2r-1$. Pair them arbitrarily and
add one auxiliary edge for each pair, allowing parallel edges. The resulting
loopless multigraph $J^{+}$ is $2r$-regular. By Lemma \ref{lem:petersen},
the union of $r/2$ of its pairwise edge-disjoint $2$-factors is an $r$%
-regular spanning subgraph $F^{+}$. Let $F$ be obtained by deleting the
auxiliary edges from $F^{+}$.

Construct $D$ by rules (i) and (ii). For every extreme vertex $u$ of $H$
whose auxiliary edge belongs to $F^{+}$, also select the extreme vertex of $%
S_{k}^{n}$ in $Q_{u}.$ If the auxiliary edge incident with $y$ belongs to $%
F^{+}$, then $\deg _{F}(y)=r-1$. Since $\deg _{J}(y)-\deg _{F}(y)=r>0$, we
may choose an edge $yz\in E(J)\setminus E(F)$ and select only the endpoint
in $Q_{y}$ of the bridging edge corresponding to $yz.$

By construction, we have $s_{y}=r+2$, and $s_{u}=r+1$ for $u\in
V(J)\setminus \{y\}.$ As in Case 1, rule (i) dominates every $D$-free
terminal clique. Thus $D$ is dominating.

For $u\in V(J)\setminus \{y\}$, only the endpoint selected by (i) has an
unselected outside neighbor. Therefore, $b_{u}=e_{u}=1$ and $c_{u}=0.$ For $%
u=y$, the two endpoints selected by (i) have unselected outside neighbors.
The additional one-sided selection, when made, contributes one further such
vertex. Consequently,%
\begin{equation*}
(b_{y},c_{y})=\left\{ 
\begin{array}{ll}
(2,1), & \text{if the auxiliary edge at }y\text{ is not in }F^{+}, \\ 
(3,0), & \text{if the auxiliary edge at }y\text{ is in }F^{+}.%
\end{array}%
\right.
\end{equation*}%
Hence $c_{u}\geq 0$ for every $u\in V(J),$ and Proposition \ref%
{prop:suff-high} implies that $D$ is secure. By Corollary \ref{cor:code-size}%
, one has%
\begin{equation*}
\left\vert D\right\vert =(q-\left\vert \mathcal{E}\right\vert )(r+1)+1=(q-%
\frac{q+1}{k+1})\frac{k+1}{2}+1=\frac{kq+1}{2}=\frac{k^{n}+1}{2}.
\end{equation*}

In both cases, the constructed secure dominating set attains the lower bound
from Lemma \ref{lem:half}. Therefore, $\gamma _{s}(S_{k}^{n})=\frac{k^{n}+1}{%
2}.$
\end{proof}

\subsection{The case $k\equiv 3$ $(\func{mod}4)$ and $k\geq 7$}

\ 

Write $k=2r+1\equiv 3$ $(\func{mod}4),$ where $r\geq 3$ is odd, and fix $%
n\geq 3$. To establish the lower bound, let $D\subset V(S_{k}^{n})$ be an
arbitrary secure dominating set. Set $H:=S_{k}^{n-1}$ and retain the
notation of Section $2$ relative to $D$. Let $K:=S_{k}^{n-2},$\ $%
q:=|V(K)|=k^{n-2}.$ Write $C_{z}:=\{za:a\in \lbrack k]\}$ for $z\in V(K).$
Since $H[C_{z}]\cong K_{k}$ and $\mathcal{E}$\ is independent in $H$ by
Lemma \ref{lem:empty}, one has $\left\vert C_{z}\cap \mathcal{E}\right\vert
\leq 1$ for $z\in V(K).$ Define 
\begin{equation*}
Z:=\{z:C_{z}\cap \mathcal{E}\neq \varnothing \},\text{\ }U:=V(K)\setminus Z.
\end{equation*}

For each $z\in Z$, the unique vertex of $C_{z}\cap \mathcal{E}$ is
nonextreme in $H$ by Lemma \ref{lem:empty} and hence has a unique neighbor
outside $C_{z}$. If the neighbor belongs to $C_{w}$, draw an arc $%
z\rightarrow w$. Thus each $z\in Z$ has exactly one outgoing arc, and there
are no directed $2$-cycles. Let $B$ be the number of arcs from $Z$ to $Z$.

For $z\in U$, define $d_{z}:=|\{v\in C_{z}:e_{v}=0\}|,$\ $d:=\sum_{z\in
U}d_{z}.$

\begin{lemma}
\label{lem:direction-count}With the notation above, $B-d=q-(k+1)|U|.$
\end{lemma}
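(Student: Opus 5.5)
The plan is a double count of the edges of $H$ that leave $\mathcal{E}$ towards the cliques $C_{w}$ with $w\in U$. The identity then follows by eliminating $|Z|$ via $|Z|+|U|=q$.

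\textbf{Step 1: structure of the arcs.} Each $v\in \mathcal{E}$ is nonextreme in $H$ by Lemma \ref{lem:empty}. So $v$ has $k-1$ neighbors inside its clique $C_{z}$ and exactly one neighbor $v'$ outside it, lying in $C_{w}$; this edge is the arc $z\rightarrow w$. Since $|C_{z}\cap \mathcal{E}|\leq 1$, arcs correspond bijectively to vertices of $\mathcal{E}$, and $|\mathcal{E}|=|Z|$. Of these $|Z|$ arcs, $B$ end in $Z$ and $|Z|-B$ end in $U$.

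\textbf{Step 2: the arcs ending in $U$ hit distinct vertices.} Every vertex of $H$ has at most one neighbor outside its own clique $C_{w}$: the nonextreme vertices have exactly one, and the extreme vertices have none, by the same adjacency structure as for the terminal cliques of $S_{k}^{n}$. Hence two distinct vertices of $\mathcal{E}$ cannot share the same outside neighbor $v'$.

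\textbf{Step 3: count $d$.} Fix $w\in U$ and $v\in C_{w}$. Then $v\notin \mathcal{E}$, and no vertex of $C_{w}$ lies in $\mathcal{E}$. So the only possible neighbor of $v$ in $\mathcal{E}$ is its unique outside neighbor, and $e_{v}\in \{0,1\}$. Moreover $e_{v}=1$ exactly when $v$ is the head of an arc from $Z$. By Steps 1 and 2, exactly $|Z|-B$ vertices of $\bigcup_{w\in U}C_{w}$ have $e_{v}=1$. Therefore
\begin{equation*}
d=\sum_{z\in U}d_{z}=k|U|-(|Z|-B).
\end{equation*}

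\textbf{Step 4: conclude.} Substituting $|Z|=q-|U|$ gives $d=(k+1)|U|-q+B$, that is, $B-d=q-(k+1)|U|$.

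The only step needing care is Step 2, the injectivity of arc heads. It rests on the fact that each vertex of $H$ has at most one neighbor outside its $C_{w}$, which follows directly from (\ref{eq:sier}).
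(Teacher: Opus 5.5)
Your proof is correct and is essentially the paper's argument: you identify the $|Z|-B$ arcs from $Z$ into $U$ bijectively with the vertices $v$ of the cliques $C_{w}$, $w\in U$, that have $e_{v}=1$, using that each vertex of $H$ has at most one neighbor outside its clique; this gives $d=k|U|-(|Z|-B)$, and you then substitute $|Z|=q-|U|$. Your Step 2 just spells out the injectivity that the paper compresses into a single sentence.
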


\begin{proof}
If $z\in U$, we have $C_{z}\cap \mathcal{E}=\varnothing .$ Since every
vertex of $C_{z}$ has at most one neighbor outside $C_{z}$, the vertices $%
v\in C_{z}$ with $e_{v}=1$\ correspond bijectively to the arcs from $Z$ to $%
z.$\ All other vertices of $C_{z}$ have $e_{v}=0$.

Since every $z\in Z$ has exactly one outgoing arc, there are precisely $%
|Z|-B $ arcs from $Z$ to $U$. Consequently, 
\begin{equation*}
d=k\left\vert U\right\vert -(|Z|-B)=k\left\vert U\right\vert
-(q-|U|-B)=B+(k+1)\left\vert U\right\vert -q.
\end{equation*}%
Thus $B-d=q-(k+1)|U|.$
\end{proof}

For $z\in V(K)$, define $\delta (C_{z}):=\sum_{v\in C_{z}\setminus \mathcal{E%
}}x_{v}.$

\begin{lemma}
\label{lem:U-excess}For every $z\in U$, $\delta (C_{z})\geq 1.$ Moreover, if 
$d_{z}=0$, then $\delta (C_{z})\geq 2.$
\end{lemma}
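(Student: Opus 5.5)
The plan is to split $C_z$ into its low and high vertices and combine the local inequalities of Section 2 with a parity argument. Fix $z\in U$, so $C_z\cap\mathcal{E}=\varnothing$ and $\delta(C_z)=\sum_{v\in C_z}x_v$. Let $L:=\{v\in C_z: s_v=r\}$ and $\ell:=|L|$. By Lemma \ref{lem:excess}, $x_v=-1$ for $v\in L$, and $x_w=c_w+a_w\geq c_w\geq t_w$ for every other $w\in C_z$ (Lemma \ref{lem:capacity}). Hence
\begin{equation*}
\delta(C_z)\geq \sum_{w\in C_z\setminus L}t_w-\ell .
\end{equation*}

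\emph{Case $\ell\geq 1$.} Take $v\in L$. By Lemma \ref{lem:minocc}, each of the $r$ selected vertices of $Q_v$ has a selected neighbor outside $Q_v$. These neighbors lie in $r$ distinct terminal cliques $Q_w$ with $w\in N_H(v)$, one for each bridging edge. Such a $w$ is not in $\mathcal{E}$, since its clique contains a selected vertex, and it is not in $L$ by Lemma \ref{lem:minmin}. Since $v$ has at most one $H$-neighbor outside $C_z$, at least $r-1$ of these $w$ lie in $C_z\setminus L$. Each such $w$ has $s_w\geq r+1$, and $v$ is counted by $t_w$. Summing over $v\in L$ gives
\begin{equation*}
\sum_{w\in C_z\setminus L}t_w\geq \ell(r-1),
\end{equation*}
so $\delta(C_z)\geq \ell(r-2)\geq 1$ because $r\geq 3$. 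We will also observe that $d_z=0$ forces $\ell=0$: if $d_z=0$ then $e_v=1$ for every $v\in C_z$, whereas Lemma \ref{lem:minocc} gives $e_v=0$ whenever $s_v=r$. So this case needs only the bound $\geq 1$.

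\emph{Case $\ell=0$.} Now every $x_v\geq 0$, so $\delta(C_z)\geq 0$, and it suffices to rule out $\delta(C_z)=0$ (and, when $d_z=0$, also $\delta(C_z)=1$).
\begin{enumerate}
\item[(a)] If $\delta(C_z)=0$, then $x_v=0$ for all $v$, hence $c_v=a_v=0$ and $2s_v=k+e_v$. Since $k$ is odd, this forces $e_v=1$ and $s_v=r+1$ for all $v\in C_z$. In particular $d_z=0$, so for $d_z>0$ we already get $\delta(C_z)\geq 1$.
\item[(b)] If $d_z=0$, then all $e_v=1$, so every $x_v$ is even by Lemma \ref{lem:excess}(3). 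Thus $\delta(C_z)$ is even, and it remains only to exclude $\delta(C_z)=0$.
\end{enumerate}

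\emph{Excluding $\delta(C_z)=0$ when $d_z=0$.} This is the main obstacle, and I would resolve it by a handshake-parity argument that uses $r$ odd. Suppose $\delta(C_z)=0$, so by (a) $s_v=r+1$ and $b_v=e_v=1$ for every $v\in C_z$. Since $e_v=1$, each $v$ is nonextreme in $H$ and has one $H$-neighbor in $\mathcal{E}$, necessarily outside $C_z$. Hence every vertex of $Q_v$ is the endpoint of a bridging edge: $k-1$ of them go to other cliques $Q_w$ with $w\in C_z$, and one goes to the $D$-free clique. By Lemma \ref{lem:empty}(1) the endpoint toward the $D$-free clique is selected, and it is the unique vertex counted by $b_v$. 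So the other $r$ selected vertices of $Q_v$ all have selected partners across bridging edges inside $C_z$. Therefore the bridging edges inside $C_z$ with both endpoints selected form a graph on the $k$ vertices of $C_z$ that is $r$-regular. Its degree sum is $r(2r+1)$, which is odd because $r$ is odd, contradicting the handshake lemma. Hence $\delta(C_z)\geq 2$ when $d_z=0$, and $\delta(C_z)\geq 1$ in all cases.
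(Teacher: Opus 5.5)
Your proof is correct and follows essentially the same route as the paper. It uses the same split on $\ell$, the same $t_w$-counting via Lemmas \ref{lem:minocc}, \ref{lem:minmin} and \ref{lem:capacity} when $\ell\geq 1$, the parity consequence of Lemma \ref{lem:excess}(3), and the same handshake contradiction for an $r$-regular spanning subgraph of $K_k$ when $\delta(C_z)=0$ and $d_z=0$. Your explicit observation that $d_z=0$ forces $\ell=0$ (since $s_v=r$ implies $e_v=0$) is left implicit in the paper, and it is genuinely needed for the second assertion when $r=3$, where the bound $(r-2)\ell$ alone only gives $\delta(C_z)\geq 1$.
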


\begin{proof}
Let $z\in U$. Since $C_{z}\cap \mathcal{E=}\varnothing $, we have $e_{v}\in
\{0,1\}$ for every $v\in C_{z}.$ Let $\ell :=|\{v\in C_{z}:s_{v}=r\}|.$

Assume that $\ell >0$. By Lemma \ref{lem:minocc}, each $Q_{v}$ with $s_{v}=r$
has $r$ selected vertices whose outside neighbors also belong to $D.$ These
correspond to $r$ distinct neighbors of $v$ in $H$, all satisfying $%
s_{w}\geq r+1$ by Lemmas \ref{lem:minocc} and \ref{lem:minmin}. Since $v$
has at most one neighbor outside $C_{z}$, at least $r-1$ of these neighbors
lie in $C_{z}$. Hence 
\begin{equation*}
\sum_{w\in C_{z},s_{w}\geq r+1}t_{w}\geq (r-1)\ell .
\end{equation*}%
By Lemmas \ref{lem:capacity} and \ref{lem:excess}, we have 
\begin{eqnarray*}
\delta (C_{z}) &=&-\ell +\sum_{w\in C_{z},s_{w}\geq r+1}x_{w}\geq -\ell
+\sum_{w\in C_{z},s_{w}\geq r+1}c_{w} \\
&\geq &-\ell +\sum_{w\in C_{z},s_{w}\geq r+1}t_{w}\geq (r-2)\ell \geq 1.
\end{eqnarray*}

Now suppose that $\ell =0$ and $d_{z}>0.$ By Lemma \ref{lem:excess}(2), we
have $x_{u}\geq 0$ for every $u\in C_{z}.$ Since $d_{z}>0,$ there exists $%
v\in C_{z}$ with $e_{v}=0.$ Thus $x_{v}=2s_{v}-k\equiv 1$ $(\func{mod}2),$
so $x_{v}\geq 1.$ Consequently, $\delta (C_{z})\geq 1.$

Finally, suppose that $\ell =0$ and $d_{z}=0.$ Then $e_{v}=1$ and $s_{v}\geq
r+1$ for every $v\in C_{z}.$ Hence $x_{v}=2(s_{v}-r-1)\geq 0,$ so every $%
x_{v}$ is a nonnegative even integer. Assume that $\delta (C_{z})=0.$ Then $%
x_{v}=0$ for every $v\in C_{z},$ which implies $s_{v}=r+1.$ Lemmas \ref%
{lem:capacity} and \ref{lem:excess}(2) imply $c_{v}=a_{v}=0,$ $%
b_{v}=e_{v}=1. $ It follows that 
\begin{equation*}
1=e_{v}=\left\vert \left( N_{H}(v)\setminus C_{z}\right) \cap \mathcal{E}%
\right\vert \leq \left\vert \left( N_{H}(v)\setminus C_{z}\right)
\right\vert \leq 1.
\end{equation*}%
Thus every $v\in C_{z}$ has a unique neighbor outside $C_{z}$ in $H,$ and
this neighbor belongs to $\mathcal{E}$. By Lemma \ref{lem:empty}(1), the
corresponding bridging edge has a selected endpoint in $Q_{v}$ and an
unselected endpoint in the $D$-free terminal clique. Since $s_{v}=r+1,$ $%
b_{v}=1,$ and $v$ is nonextreme in $H$, it follows that the remaining $r$
selected vertices of $Q_{v}$ have selected outside neighbors. These
correspond to $r$ distinct edges of $H[C_{z}]$. Consequently, these edges
form an $r$-regular spanning subgraph of $H[C_{z}]\cong K_{k}.$ This
contradicts the handshaking lemma, since $kr$ is odd. Thus $\delta (C_{z})>0$%
. Since all $x_{v}$ are even, we have $\delta (C_{z})\geq 2$.
\end{proof}

\begin{lemma}
\label{lem:Z-excess}For every $z\in Z$, $\delta (C_{z})\geq \func{indeg}%
_{Z}(z),$ where $\func{indeg}_{Z}(z)$ denotes the number of arcs from $Z$ to 
$z.$ Consequently, $\sum_{z\in Z}\delta (C_{z})\geq B.$
\end{lemma}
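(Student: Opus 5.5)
The argument is a parity count inside a single clique $C_z$, combining Lemma \ref{lem:minocc} with the sign and parity information in Lemma \ref{lem:excess}.

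Fix $z\in Z$ and let $w_0$ be the unique vertex of $C_z\cap\mathcal{E}$. Since $H[C_z]\cong K_k$, every $v\in C_z\setminus\{w_0\}$ is adjacent to $w_0$, so $e_v\geq 1$. Each such $v$ has at most one neighbor outside $C_z$, so
\[
e_v=1+\varepsilon_v,\qquad \varepsilon_v\in\{0,1\},
\]
where $\varepsilon_v=1$ exactly when $v$ has an outside neighbor lying in $\mathcal{E}$.

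Next I identify the arcs into $z$ with the vertices having $\varepsilon_v=1$. An arc $z'\to z$ with $z'\in Z$ means the vertex of $C_{z'}\cap\mathcal{E}$ has its outside neighbor $v$ in $C_z$. This $v$ is not $w_0$, because $\mathcal{E}$ is independent by Lemma \ref{lem:empty}. So $v\in C_z\setminus\{w_0\}$ has an outside neighbor in $\mathcal{E}$, i.e.\ $\varepsilon_v=1$. Conversely, if $\varepsilon_v=1$, the outside neighbor of $v$ lies in some $C_{z'}\cap\mathcal{E}$ with $z'\neq z$, which gives an arc $z'\to z$. Distinct arcs give distinct $v$, since each vertex has at most one outside neighbor. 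Hence $\func{indeg}_Z(z)=|\{v\in C_z\setminus\{w_0\}: e_v=2\}|$.

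Now bound each $x_v$ for $v\in C_z\setminus\{w_0\}$.
\begin{itemize}
\item Since $e_v\geq1$, Lemma \ref{lem:minocc} rules out $s_v=r$, because $s_v=r$ would force $e_v=0$. So $s_v\geq r+1$, and Lemma \ref{lem:excess}(2) gives $x_v\geq 0$.
\item By Lemma \ref{lem:excess}(3), $x_v\equiv 1+e_v \pmod 2$. When $e_v=2$, $x_v$ is odd and nonnegative, hence $x_v\geq 1$.
\end{itemize}
Summing over $C_z\setminus\mathcal{E}=C_z\setminus\{w_0\}$ gives $\delta(C_z)\geq \func{indeg}_Z(z)$.

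For the consequence, summing over $z\in Z$ gives $\sum_{z\in Z}\delta(C_z)\geq\sum_{z\in Z}\func{indeg}_Z(z)$. The right-hand side counts every arc with both ends in $Z$ exactly once at its head, so it equals $B$.

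The only delicate step is the bijection between arcs into $z$ and vertices of $C_z$ with $e_v=2$. In particular one must exclude $w_0$ as an arc head, which is where the independence of $\mathcal{E}$ from Lemma \ref{lem:empty} is used.
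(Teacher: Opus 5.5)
Your proof is correct and follows essentially the same route as the paper. Both arguments use $e_v\ge 1$ and Lemma \ref{lem:minocc} to get $s_v\ge r+1$, hence $x_v\ge 0$ by Lemma \ref{lem:excess}(2); both use the independence of $\mathcal{E}$ to see that each arc into $z$ from $Z$ has a distinct head $v\notin\mathcal{E}$ with $e_v=2$, so $x_v\ge 1$ by parity. The only difference is that you prove a full bijection between such arcs and the vertices with $e_v=2$, whereas the paper needs, and proves, only the injection.
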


\begin{proof}
Fix $z\in Z.$ Since $C_{z}\cap \mathcal{E}$ consists of exactly one vertex
and $C_{z}$ is complete, we have $e_{v}\geq 1$ for $v\in C_{z}\setminus 
\mathcal{E}$. By Lemma \ref{lem:minocc}, $s_{v}\geq r+1$, and hence $%
x_{v}\geq 0$ by Lemma \ref{lem:excess}(2).

For each arc $w\rightarrow z$ with $w\in Z,$ its corresponding endpoint $%
v\in C_{z}$ is adjacent to the unique vertices of $C_{w}\cap \mathcal{E}$
and $C_{z}\cap \mathcal{E}$. Since $\mathcal{E}$ is independent, $v\notin 
\mathcal{E}.$ Then $e_{v}=2,$ and distinct incoming arcs have distinct
endpoints. By Lemma \ref{lem:excess}(3), one has $x_{v}\equiv 1+e_{v}\equiv
1 $ $(\func{mod}2).$ Since $x_{v}\geq 0,$ we have $x_{v}\geq 1$ for every
such endpoint. Therefore, $\delta (C_{z})\geq \func{indeg}_{Z}(z).$ Summing
over $z\in Z,$ we obtain $\sum_{z\in Z}\delta (C_{z})\geq B.$
\end{proof}

\begin{theorem}
\label{thm:k3mod4-lower}Let $k\equiv 3$ $(\func{mod}4)$, $k\geq 7$, and $%
n\geq 3$. Then every secure dominating set $D$ of $S_{k}^{n}$ satisfies $%
|D|\geq \frac{k^{n}+k^{n-3}}{2}.$
\end{theorem}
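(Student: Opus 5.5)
The plan is to sum the local excesses over the cliques $C_z$ and compare the total with $\Delta=2|D|-k^{n}$. The target is $|D|\ge (k^n+k^{n-3})/2$, which is equivalent to $\Delta\ge k^{n-3}$. Since $q=k^{n-2}$, this means $\Delta\ge q/k$.

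First I would use Lemma \ref{lem:excess} to split the sum by cliques:
\begin{equation*}
\Delta=\sum_{u\notin\mathcal{E}}x_u=\sum_{z\in V(K)}\delta(C_z)=\sum_{z\in U}\delta(C_z)+\sum_{z\in Z}\delta(C_z).
\end{equation*}
Lemma \ref{lem:Z-excess} bounds the second sum below by $B$. For the first sum, Lemma \ref{lem:U-excess} gives $\delta(C_z)\ge 1$ when $d_z>0$ and $\delta(C_z)\ge 2$ when $d_z=0$. Since $d_z\le k$, we get $\delta(C_z)\ge 2-\min(d_z,1)\ge 2-d_z$ in all cases.

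One option is to sum directly. This gives $\Delta\ge 2|U|-d+B$, and by Lemma \ref{lem:direction-count}, $B-d=q-(k+1)|U|$, so $\Delta\ge q-(k-1)|U|$. That bound is useless when $|U|$ is large, so the estimate must be weighted differently.

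The better route is to use convex combinations of the two inequalities. The first is $\Delta\ge B+\sum_{z\in U}\delta(C_z)\ge B+|U|$. The second is $\Delta\ge B+2|U|-d=q-(k-1)|U|$. Taking $\tfrac{k-1}{k}$ times the first plus $\tfrac{1}{k}$ times the second gives
\begin{equation*}
\Delta\ge \frac{k-1}{k}B+\frac{q}{k}\ge \frac{q}{k}=k^{n-3},
\end{equation*}
since $B\ge 0$. Both inequalities hold simultaneously, so any convex combination holds. Concretely, I would use the per-clique bound $\delta(C_z)\ge \tfrac{k-1}{k}\cdot 1+\tfrac1k(2-d_z)$, which is valid in both cases of Lemma \ref{lem:U-excess} because $d_z\ge 0$. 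Summing over $z\in U$ gives $\sum_{z\in U}\delta(C_z)\ge \tfrac{k+1}{k}|U|-\tfrac{d}{k}$. Adding $B$ and substituting $d=B+(k+1)|U|-q$ gives
\begin{equation*}
\Delta\ge B+\frac{(k+1)|U|-d}{k}=B+\frac{q-B}{k}\ge \frac{q}{k}.
\end{equation*}

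The main obstacle is finding this weighting. The naive sum fails for large $|U|$, and the fix is to trade each unit of $d$ against an excess of $1/k$ per clique. I would also double-check that $d_z\le k$ is not needed, and that the case $d_z=0$ (where $\delta\ge2$) is consistent with the linear bound; both are immediate. Finally, $\Delta\ge k^{n-3}$ gives $|D|\ge (k^n+k^{n-3})/2$, and parity is automatic since both numbers are odd.
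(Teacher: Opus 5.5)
Your proposal is correct and is essentially the paper's argument. You combine the same two bounds, $\Delta\ge|U|$ (you retain a harmless extra $+B$) and $\Delta\ge q-(k-1)|U|$ from Lemmas~\ref{lem:direction-count}, \ref{lem:U-excess} and \ref{lem:Z-excess}, with weights $\tfrac{k-1}{k}$ and $\tfrac{1}{k}$; this is exactly the paper's step $k\Delta=(k-1)\Delta+\Delta\ge q$, and keeping $B$ only gives the slightly sharper but unneeded bound $\Delta\ge\bigl((k-1)B+q\bigr)/k$.
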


\begin{proof}
Since the cliques $C_{z}$ partition $V(H),$ Lemmas \ref{lem:excess}, \ref%
{lem:U-excess}, and \ref{lem:Z-excess} give%
\begin{equation}
\Delta =\sum_{z\in V(K)}\delta (C_{z})\geq \left\vert U\right\vert
+\left\vert \{z\in U:d_{z}=0\}\right\vert +B  \label{eq:Delta}
\end{equation}%
Since $d_{z}\geq 1$ whenever $d_{z}>0,$ we have 
\begin{equation*}
d=\sum_{z\in U}d_{z}\geq \left\vert \{z\in U:d_{z}>0\}\right\vert
=\left\vert U\right\vert -\left\vert \{z\in U:d_{z}=0\}\right\vert .
\end{equation*}%
Consequently, by (\ref{eq:Delta}) and Lemma \ref{lem:direction-count}, one
has $\Delta \geq 2\left\vert U\right\vert +B-d=q-(k-1)\left\vert
U\right\vert .$ On the other hand, (\ref{eq:Delta}) also gives $\Delta \geq
\left\vert U\right\vert .$\ Combining these two bounds, we obtain $k\Delta
=(k-1)\Delta +\Delta \geq (k-1)\left\vert U\right\vert +q-(k-1)\left\vert
U\right\vert =q.$ Hence $2\left\vert D\right\vert -k^{n}=\Delta \geq \frac{q%
}{k},$ which implies $|D|\geq \frac{k^{n}+k^{n-3}}{2}.$
\end{proof}

\begin{lemma}
\label{lem:blocks}Let $n\geq 3.$ Fixing the first $n-3$ coordinates of the
vertices of $S_{k}^{n}$ gives an induced subgraph isomorphic to $S_{k}^{3}$.
Moreover, every edge joining two distinct such subgraphs has both endpoints
extreme in their respective copies of $S_{k}^{3}.$
\end{lemma}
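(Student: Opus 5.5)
The plan is to work directly from the adjacency rule (\ref{eq:sier}). For a word $w=w_{1}\cdots w_{n-3}\in [k]^{n-3}$, set $V_{w}:=\{wx:x\in [k]^{3}\}$, and consider the map $\varphi_{w}:[k]^{3}\to V_{w}$, $x\mapsto wx$.

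The first step is to show that $\varphi_{w}$ is an isomorphism from $S_{k}^{3}$ onto $S_{k}^{n}[V_{w}]$. Take $u=wx$ and $v=wy$ with $x\neq y$. Suppose $u\sim v$ in $S_{k}^{n}$, and let $h$ be the witnessing index. Since $u$ and $v$ agree on the first $n-3$ coordinates, $h>n-3$. Write $h=n-3+h'$. Then the conditions of (\ref{eq:sier}) for $u,v$ at index $h$ are exactly the conditions for $x,y$ at index $h'$ in $S_{k}^{3}$: the coordinates before $h$ agree, and those after $h$ satisfy the swap condition. The converse direction is the same computation read backwards. So the induced subgraph on $V_{w}$ is a copy of $S_{k}^{3}$, and its extreme vertices are $wiii$ for $i\in[k]$.

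The second step handles edges between two different blocks. Let $u\in V_{w}$ and $v\in V_{w'}$ with $w\neq w'$, and suppose $u\sim v$ with witnessing index $h$. Because $u$ and $v$ differ somewhere among the first $n-3$ coordinates, and they agree on every coordinate before $h$, we must have $h\leq n-3$. Condition (\ref{eq:sier}) then forces $u_{t}=v_{h}$ and $v_{t}=u_{h}$ for all $t>h$, and in particular for $t=n-2,n-1,n$. Hence the last three coordinates of $u$ are $v_{h}v_{h}v_{h}$, and those of $v$ are $u_{h}u_{h}u_{h}$. These are exactly the extreme vertices of their respective copies of $S_{k}^{3}$.

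Both steps are routine index bookkeeping. The only point needing care is the inequality $h\leq n-3$ in the second step, which follows because $h$ is the first coordinate where $u$ and $v$ differ.
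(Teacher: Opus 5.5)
Your proof is correct and follows the same route as the paper. You read off from (\ref{eq:sier}) that within a block the witnessing index is $h=n-3+h'$, which gives the copy of $S_k^3$. For an edge between blocks, the first differing index satisfies $h\le n-3$, and the swap condition then makes the last three coordinates of each endpoint constant. Your version just writes out the index shift more explicitly than the paper does.
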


\begin{proof}
By the adjacency definition (\ref{eq:sier}), vertices with the same first $%
n-3$ coordinates are adjacent precisely when their last three coordinates
are adjacent in $S_{k}^{3}.$ Hence each fixed prefix induces a copy of $%
S_{k}^{3}.$

Let $u=u_{1}\cdots u_{n}$ and $v=v_{1}\cdots v_{n}$ be adjacent vertices in
distinct copies of $S_{k}^{3}$, and let $h\leq n-3$ be their first differing
coordinate. By (\ref{eq:sier}), we have $u_{t}=v_{h},\ v_{t}=u_{h}\ $for $%
t>h.$ In particular, $u_{n-2}=u_{n-1}=u_{n}=v_{h}$ and $%
v_{n-2}=v_{n-1}=v_{n}=u_{h}.$ Thus both endpoints are extreme vertices of
their respective copies of $S_{k}^{3}.$
\end{proof}

\begin{theorem}
\label{thm:k3mod4-upper}If $k\equiv 3$ $(\func{mod}4)$, $k\geq 7$, and $%
n\geq 3$, then $\gamma _{s}(S_{k}^{n})\leq \frac{k^{n}+k^{n-3}}{2}.$
\end{theorem}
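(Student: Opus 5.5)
The plan is to use Lemma \ref{lem:blocks} and Corollary \ref{cor:n3-noextreme}. By Lemma \ref{lem:blocks}, $V(S_k^n)$ splits into $k^{n-3}$ vertex-disjoint induced copies of $S_k^3$, one for each prefix of length $n-3$. By Corollary \ref{cor:n3-noextreme}, since $k\equiv 3$ $(\func{mod}4)$, there is a minimum secure dominating set $D_3$ of $S_k^3$ that contains no extreme vertex, and Theorem \ref{thm:n3} gives $|D_3|=\frac{k^3+1}{2}$. We place a copy of $D_3$ in each block and let $D$ be the union. Then
\begin{equation*}
|D|=k^{n-3}\cdot\frac{k^3+1}{2}=\frac{k^n+k^{n-3}}{2},
\end{equation*}
which is exactly the claimed bound.

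\textbf{Domination.} Domination carries over directly: each block copy of $D_3$ dominates its own block, and a block is an induced subgraph of $S_k^n$.

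\textbf{Security.} We check (\ref{eq:security}) for an arbitrary nonempty $X\subset D$. Write $X=\bigsqcup_{P}X_P$, where $X_P$ is the part of $X$ lying in the block with prefix $P$. By Lemma \ref{lem:blocks}, every edge between distinct blocks has both endpoints extreme in their blocks. No vertex of $D$ is extreme in its block, so no vertex of $X_P$ has a neighbor outside its block. Hence $N_{S_k^n}[X_P]=N_{S_k^3}[X_P]$, computed inside the block, and these closed neighborhoods are pairwise disjoint because they lie in disjoint blocks. Summing the inequalities $|N[X_P]\cap D|\ge |N[X_P]\setminus D|$, which hold by the security of $D_3$ in each block with $X_P\neq\varnothing$, gives (\ref{eq:security}) for $X$.

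The only delicate point is the requirement that $D_3$ avoid extreme vertices. Corollary \ref{cor:n3-noextreme} supplies this, and it needs $r$ odd, which is exactly the case $k\equiv 3$ $(\func{mod}4)$. Without it, an extreme vertex in $X$ could have an unselected neighbor in another block, and the counting would break. With that in place the argument is short.
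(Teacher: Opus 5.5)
Your proposal is correct and matches the paper's proof: split $S_k^n$ into $k^{n-3}$ copies of $S_k^3$ via Lemma~\ref{lem:blocks}, and place in each copy the extreme-vertex-free minimum secure dominating set from Corollary~\ref{cor:n3-noextreme}. Security then follows because the closed neighborhoods of the pieces $X_P$ stay inside their own blocks. The paper handles $n=3$ separately via Theorem~\ref{thm:n3}, but your argument also covers it, with a single block.
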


\begin{proof}
For $n=3$, the result follows from Theorem \ref{thm:n3}. Let $n>3$. By Lemma %
\ref{lem:blocks}, fixing the first $n-3$ coordinates partitions $%
V(S_{k}^{n}) $ into the vertex sets of $k^{n-3}$ induced subgraphs $%
G_{1},\ldots ,G_{k^{n-3}},$ each of which is isomorphic to $S_{k}^{3}.$

By Theorem \ref{thm:n3} and Corollary \ref{cor:n3-noextreme}, each $G_{i}$
admits a secure dominating set $D_{i}$ such that $\left\vert
D_{i}\right\vert =\frac{k^{3}+1}{2},$ and $D_{i}$ contains no extreme vertex
of $G_{i}.$ Let $D:=\bigcup_{i=1}^{k^{n-3}}D_{i}.$ By Lemma \ref{lem:blocks}%
, no vertex of $D$ is incident with an edge between distinct copies.
Consequently, for every nonempty $X\subset D,$ we have $N[X]=%
\bigsqcup_{i=1}^{k^{n-3}}N_{G_{i}}[X\cap D_{i}].$ Since each $D_{i}$ is
secure in $G_{i},$ we obtain 
\begin{equation*}
|N[X]\cap D|-|N[X]\setminus D|=\sum_{i=1}^{k^{n-3}}\left( \left\vert
N_{G_{i}}[X\cap D_{i}]\cap D_{i}\right\vert -\left\vert N_{G_{i}}[X\cap
D_{i}]\setminus D_{i}\right\vert \right) \geq 0.
\end{equation*}%
Thus $D$ is secure in $S_{k}^{n}.$ Moreover, since each $D_{i}$ dominates $%
G_{i},$ it follows that 
\begin{equation*}
N[D]\supset
\bigcup_{i=1}^{k^{n-3}}N_{G_{i}}[D_{i}]=%
\bigcup_{i=1}^{k^{n-3}}V(G_{i})=V(S_{k}^{n}).
\end{equation*}%
Hence $D$ is dominating. Finally, we have $\left\vert D\right\vert
=\sum_{i=1}^{k^{n-3}}\left\vert D_{i}\right\vert =\frac{k^{n}+k^{n-3}}{2}.$
This proves the asserted upper bound.
\end{proof}

\begin{theorem}
\label{thm:k3mod4} If $k\equiv 3$ $(\func{mod}4)$, $k\geq 7$, and $n\geq 3$,
then $\gamma _{s}(S_{k}^{n})=\frac{k^{n}+k^{n-3}}{2}.$
\end{theorem}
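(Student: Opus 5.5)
The statement is obtained by putting together the two bounds already proved for this residue class. For the lower bound, let $D$ be any secure dominating set of $S_{k}^{n}$, with $k\equiv 3$ $(\func{mod}4)$, $k\geq 7$ and $n\geq 3$. Theorem \ref{thm:k3mod4-lower} gives $|D|\geq \frac{k^{n}+k^{n-3}}{2}$. Since $D$ was arbitrary, $\gamma _{s}(S_{k}^{n})\geq \frac{k^{n}+k^{n-3}}{2}$.

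For the upper bound, Theorem \ref{thm:k3mod4-upper} gives $\gamma _{s}(S_{k}^{n})\leq \frac{k^{n}+k^{n-3}}{2}$ directly. Its construction splits $S_{k}^{n}$ into $k^{n-3}$ copies of $S_{k}^{3}$ using Lemma \ref{lem:blocks}. In each copy it places the minimum secure dominating set from Theorem \ref{thm:n3}, which by Corollary \ref{cor:n3-noextreme} avoids the extreme vertices because $r$ is odd. When $n=3$ the two bounds coincide with $\frac{k^{3}+1}{2}$ from Theorem \ref{thm:n3}, so that case is also consistent.

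Combining the two inequalities gives the equality. No step here is a real obstacle, since the substantive work is in Theorems \ref{thm:k3mod4-lower} and \ref{thm:k3mod4-upper}. Of those two, the lower bound was the delicate part: it rests on Lemma \ref{lem:U-excess}, where the handshaking parity argument needs $kr$ to be odd, that is, $r$ odd.

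The only point to check is that the hypotheses match: both cited theorems assume exactly $k\equiv 3$ $(\func{mod}4)$, $k\geq 7$ and $n\geq 3$, which is the hypothesis of this theorem. Finally, together with Theorem \ref{thm:k3} for $k=3$ and Theorem \ref{thm:k1} for $k\equiv 1$ $(\func{mod}4)$, this completes the proof of Theorem \ref{thm:main}.
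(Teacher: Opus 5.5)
Your proposal is correct and follows the paper's proof exactly: the equality is obtained by combining the lower bound of Theorem~\ref{thm:k3mod4-lower} with the upper bound of Theorem~\ref{thm:k3mod4-upper}, both proved under the same hypotheses $k\equiv 3$ $(\func{mod}4)$, $k\geq 7$, $n\geq 3$. Your side remarks are also accurate: the $n=3$ case reduces to Theorem~\ref{thm:n3}, the construction relies on Corollary~\ref{cor:n3-noextreme} because $r$ is odd, and the parity argument in Lemma~\ref{lem:U-excess} needs $kr$ odd.
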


\begin{proof}
The result follows immediately from Theorems \ref{thm:k3mod4-lower} and \ref%
{thm:k3mod4-upper}.
\end{proof}

Theorem \ref{thm:main} follows from Theorems \ref{thm:k3}, \ref{thm:k1}, and %
\ref{thm:k3mod4}. Together with Theorem \ref{thm:known-menon}, we obtain the
complete formula for odd $k.$

\begin{corollary}
\label{cor:complete} Let $k\geq 3$ be odd and $n\geq 1$. Then 
\begin{equation*}
\gamma _{s}(S_{k}^{n})=%
\begin{cases}
\dfrac{k+1}{2}, & n=1, \\[2mm] 
\dfrac{k(k+1)}{2}-1, & n=2, \\[2mm] 
\dfrac{k^{n}+1}{2}, & n\geq 3,\text{ }k=3\text{ or }k\equiv 1\text{ }(\func{%
mod}4), \\[2mm] 
\dfrac{k^{n}+k^{n-3}}{2}, & n\geq 3,\text{ }k\equiv 3\text{ }(\func{mod}4),%
\text{ }k\geq 7.%
\end{cases}%
\end{equation*}
\end{corollary}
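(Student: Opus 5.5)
This corollary is an assembly of results already proved, so the plan is a case split on $n$ that quotes the right theorem in each case. No new combinatorial argument should be needed.

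For $n=1$, I would invoke the last sentence of Theorem \ref{thm:known-menon}. It gives $\gamma_{s}(S_{k}^{1})=\gamma_{s}(K_{k})=\lceil k/2\rceil$, and since $k$ is odd this equals $(k+1)/2$.

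For $n=2$, Theorem \ref{thm:known-menon} gives $\gamma_{s}(S_{k}^{2})=k\lceil k/2\rceil-1$. With $k$ odd, $\lceil k/2\rceil=(k+1)/2$, so the value is $k(k+1)/2-1$.

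For $n\geq 3$, I would split by the residue of $k$.
\begin{enumerate}
\item If $k=3$, Theorem \ref{thm:k3} holds for all $n\geq 1$ and gives $(3^{n}+1)/2$.
\item If $k\equiv 1\pmod 4$, then $k\geq 5$, and Theorem \ref{thm:k1} gives $(k^{n}+1)/2$.
\item If $k\equiv 3\pmod 4$ and $k\geq 7$, Theorem \ref{thm:k3mod4} gives $(k^{n}+k^{n-3})/2$.
\end{enumerate}
Since every odd $k\geq 3$ is either $3$, or $\equiv 1 \pmod 4$, or $\equiv 3\pmod 4$ with $k\geq 7$, these cases are exhaustive. They are also mutually consistent: $k=3$ is the only odd $k\equiv 3\pmod 4$ that is excluded from the last case. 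This is just a restatement of Theorem \ref{thm:main}.

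The only point needing care is the boundary overlap. Theorem \ref{thm:k3} also covers $n=1,2$ when $k=3$, and there it must agree with the first two lines. For $n=1$ it gives $(3+1)/2=2$. For $n=2$ it gives $(9+1)/2=5=3\cdot 4/2-1$. So the case distinctions in the display are unambiguous. There is no real obstacle; the proof is bookkeeping.
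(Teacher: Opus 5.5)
Your proposal is correct and follows the paper's own argument. The paper likewise obtains the cases $n=1,2$ from Theorem~\ref{thm:known-menon}, using $\lceil k/2\rceil=(k+1)/2$ for odd $k$, and the case $n\geq 3$ from Theorems~\ref{thm:k3}, \ref{thm:k1} and \ref{thm:k3mod4}; your extra consistency check for $k=3$, $n\in\{1,2\}$ is correct but not needed, since the display splits on $n$ first.
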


\begin{remark}
For $k\equiv 3$ $(\func{mod}4)$ and $k\geq 7$, the correction term $%
k^{n-3}/2 $ reflects the parity obstruction in Lemma \ref{lem:U-excess},
while the construction in Theorem \ref{thm:k3mod4-upper} attains the
corresponding lower bound.
\end{remark}

\section*{Acknowledgments}

The research is partially supported by the National Natural Science
Foundation of China (No. 12301107) and the Natural Science Foundation of
Shandong Province, China (No. ZR202209010046).

\section*{Conflict of interest}

The authors declare that they have no financial or non-financial interests
relevant to the content of this article.

\end{document}